\documentclass[a4paper,10pt]{amsart}
\pagestyle{myheadings}
\usepackage{amsmath,amssymb,amsfonts,amsthm}
\usepackage{latexsym,mathrsfs}
\usepackage[all]{xy}
\usepackage{longtable}
\usepackage{supertabular}
\input xypic
\theoremstyle{plain}
\newtheorem{theorem}[subsection]{{\bf Theorem}}
\newtheorem*{theorem*}{{\bf Theorem}}
\newtheorem{corollary}[subsection]{{\bf Corollary}}
\newtheorem*{corollary*}{{\bf Corollary}}
\newtheorem{proposition}[subsection]{{\bf Proposition}}
\newtheorem{lemma}[subsection]{{\bf Lemma}}

\theoremstyle{definition}

\theoremstyle{remark}

\numberwithin{equation}{subsection}

\DeclareMathOperator{\HH}{H}

\DeclareMathOperator{\B}{B}

\DeclareMathOperator{\M}{M}

\DeclareMathOperator{\res}{res}

\newcommand{\QZ}{\mathbb{Q}/\mathbb{Z}}

\begin{document}
\title[Unramified Brauer groups]
{Groups of order $p^5$ and their unramified Brauer groups}
\author{Primo\v z Moravec}
\address{{
Department of Mathematics \\
University of Ljubljana \\
Jadranska 21 \\
1000 Ljubljana \\
Slovenia}}
\email{primoz.moravec@fmf.uni-lj.si}
\subjclass[2010]{13A50, 14E08, 14M20, 12F12}
\keywords{Unramified Brauer group, Bogomolov multiplier}
\thanks{}
\date{\today}
\begin{abstract}
\noindent
We classify all groups of order $p^5$ with non-trivial unramified Brauer groups.
We show that if $p>3$, then there are precisely $\gcd (p-1,4)+\gcd (p-1,3)+1$
such groups.
\end{abstract}
\maketitle
\section{Introduction}
\label{s:intro}

\noindent
Let $G$ be a finite group and $V$ a faithful representation of $G$ over $\mathbb{C}$. Then there is a natural action of $G$ upon
the field of rational functions $\mathbb{C}(V)$. A problem posed by Emmy Noether \cite{Noe16} asks as to whether 
the field of $G$-invariant functions $\mathbb{C}(V)^G$ is purely transcendental over $\mathbb{C}$, i.e.,
whether the quotient space $V/G$ is {\it rational}. A question related to the above mentioned
is whether $V/G$ is {\it stably rational}, that is,
whether there exist independent variables $x_1,\ldots ,x_r$ such that $\mathbb{C}(V)^G(x_1,\ldots ,x_r)$ becomes a pure
transcendental extension of $\mathbb{C}$. This problem has close connection
with L\"{u}roth's problem \cite{Saf91} and the inverse Galois problem \cite{Swa83,Sal84}.
By Hilbert's Theorem 90 stable rationality of $V/G$
does not depend upon the choice of $V$, but only on the group $G$. Saltman \cite{Sal84} found examples of groups $G$ of order $p^9$ such
that $V/G$ is not stably rational over $\mathbb{C}$. His main method was application of the unramified cohomology group
$\HH^2_{\rm nr}(\mathbb{C}(V)^G,\QZ )$ as an obstruction. 
A version of this invariant had been used before by Artin and Mumford \cite{Art72} who constructed unirational varieties over
$\mathbb{C}$ that were not rational.
Bogomolov \cite{Bog88} further explored this cohomology group. He proved 
that $\HH^2_{\rm nr}(\mathbb{C}(V)^G,\QZ )$ is canonically isomorphic to 
\begin{equation}
\label{eq:b0def}
\B_0(G)=\bigcap _{\tiny\begin{matrix}A\le G,\\ A \hbox{ abelian}\end{matrix}} \ker \res ^G_A,
\end{equation}
where $\res ^G_A:\HH^2(G,\QZ )\to \HH^2(A,\QZ )$ is the usual cohomological restriction map.
The group $\B_0(G)$ is a subgroup of the {\it Schur multiplier} $\HH ^2(G,\QZ )$ of $G$. Kunyavski\u\i ~\cite{Kun08} coined the term the
{\it Bogomolov multiplier} of $G$ for the group $\B_0(G)$.
Bogomolov used the above description to find
new examples of groups $G$ of order $p^6$ with $\HH^2_{\rm nr}(\mathbb{C}(V)^G,\QZ )\neq 0$. Subsequently,
Bogomolov, Maciel and Petrov \cite{Bog04} showed that $\B_0(G)=0$ when $G$ is a finite simple group of Lie type $A_\ell$, whereas
Kunyavski\u\i \ \cite{Kun08} recently proved that $\B_0(G)=0$ for every quasisimple or almost simple group $G$. 

The Bogomolov multiplier of a given finite group is hard to compute. To illustrate this, we note that Chu, Hu, Kang, and Kunyavski\u\i \ \cite{Chu09}
only recently computed Bogomolov multipliers of all groups of order 64. In our paper \cite{Mor11}, we have obtained a new description of $\B _0(G)$
for a finite group $G$. It relies on the notion of the {\it nonabelian exterior square}
$G\wedge G$ of the group $G$ (see Section \ref{s:exterior} for definition and further details). 
To state the main result of \cite{Mor11}, denote by
$\M (G)$ be the kernel of the commutator
homomorphism $\kappa :G\wedge G\to[G,G]$, and $\M _0(G)=\langle x\wedge y\mid x,y\in G, [x,y]=1\rangle$. Then $\B _0(G)$ is isomorphic
to $\M (G)/\M_0(G)$. One of the advantages of this approach is that it provides a purely combinatorial description of $\B _0(G)$ in terms of 
presentation of $G$. Among other things, this leads to a Hopf-type formula for $\B _0(G)$, and an efficient algorithm for computing $\B_0(G)$ when
$G$ is a finite solvable group. This method enables fast computer calculations of $\B_0(G)$ for a reasonably large finite solvable group $G$, cf.
\cite{Mor11} for further details.

In his seminal paper \cite{Bog88}, Bogomolov claimed that if $G$ is a group of order $p^5$, where $p$ is a prime, then $\B_0(G)$ is trivial. 
Moreover, it had been conjectured that if $G$ is a group of order $p^5$, then $\mathbb{C}(V)^G$ is always purely transcendental over $\mathbb{C}$.
This is indeed true for
$p=2$ and was confirmed by Chu, Hu, Kang, and Prokhorov \cite{Chu08}. On the other hand, we showed \cite{Mor11} that there are precisely
three groups of order $3^5$ with nontrivial Bogomolov multiplier. This shows that both Bogomolov's claim and the above conjecture are false. 
More recently, Hoshi and Kang \cite{Hos11} proved that for every odd prime
$p$ there exist groups $G$ of order $p^5$ with $\B _0(G)\neq 0$. They posed a problem of classification of all such groups $G$. 

The purpose of this paper is to classify all groups of order $p^5$ with nontrivial Bogomolov multiplier. A complete list of all groups of order
$p^5$ is known by the work of James \cite{Jam80}. In his paper, the nonabelian groups of order $p^5$ are divided into isoclinism
families $\Phi _k$, $2\le k\le 10$. Hoshi and Kang \cite{Hos11} proved the following result.

\begin{theorem}[Hoshi, Kang, \cite{Hos11}]
\label{t:hoshi}
Let $G$ be a group of order $p^5$ that belongs to the family $\Phi _{10}$. Then $\B_0(G)\neq 0$.
\end{theorem}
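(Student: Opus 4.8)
The plan is to combine the description $\B_0(G)\cong \M(G)/\M_0(G)$ of \cite{Mor11} with the (known) fact that $\B_0$ is an invariant of the isoclinism class. Since $\Phi_{10}$ is a single isoclinism class, it then suffices to exhibit one representative $G$ of order $p^5$ in $\Phi_{10}$ and to verify $\M(G)/\M_0(G)\neq 0$; the conclusion for every group in $\Phi_{10}$ follows at once and uniformly in $p$, even though the family contains several isomorphism types. Before computing, it is worth recording why the phenomenon must live in $\Phi_{10}$: if $G$ had nilpotency class $2$, the commutator would induce an alternating map $\wedge^2(G/Z(G))\to [G,G]$, and a short analysis of the possible ranks for $|G|=p^5$ shows that the kernel of this map is always spanned by decomposable (hence commuting) wedges, so that $\M_0(G)=\M(G)$ and $\B_0(G)=0$. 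Thus the representative $G$ I choose will be one of James's groups of class $\geq 3$, and it is the extra commutator relations that will create a surviving class.

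First I would compute the nonabelian exterior square $G\wedge G$ directly from the power-commutator presentation of $G$ in \cite{Jam80}. Using the defining relations $xy\wedge z=({}^{x}y\wedge {}^{x}z)(x\wedge z)$, its mirror identity, and $x\wedge x=1$, the group $G\wedge G$ is generated by the basic wedges $a_i\wedge a_j$ of the generators, and the relations of $G$ translate into relations among these. Evaluating the commutator map $\kappa\colon G\wedge G\to [G,G]$, $a_i\wedge a_j\mapsto [a_i,a_j]$, pins down $\M(G)=\ker\kappa$ as an explicit finite abelian $p$-group, namely the Schur multiplier $\HH^2(G,\QZ)$. I would then single out a specific $\omega\in \M(G)$, given as a product of basic wedges lying in $\ker\kappa$, as the candidate nontrivial class.

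Next I would compute $\M_0(G)=\langle x\wedge y\mid [x,y]=1\rangle$. As only central wedges and wedges inside abelian subgroups contribute, it is enough to let $x,y$ run over representatives of the commuting pairs modulo the centre, expand each $x\wedge y$ in the basic wedges, and take the subgroup they generate inside $\M(G)$. At this point the theorem reduces to the single assertion $\omega\notin \M_0(G)$.

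Proving this non-membership is the heart of the matter and the step I expect to be hardest. Rather than describing $\M_0(G)$ by brute enumeration, I would exhibit a homomorphism $\phi\colon \M(G)\to \Z/p$ that kills every commuting wedge yet satisfies $\phi(\omega)\neq 0$; such a $\phi$ is exactly a nonzero functional on the quotient $\M(G)/\M_0(G)=\B_0(G)$. Concretely, $\phi$ can be manufactured from a central extension of $G$ by $\Z/p$ whose restriction to every abelian subgroup vanishes, which is precisely Bogomolov's condition \eqref{eq:b0def} and supplies an independent check through the cohomological picture. The genuine obstacle is organizing this finite but delicate bookkeeping so that one class is seen to survive for all primes $p$ simultaneously; the isoclinism reduction is what makes this feasible, since it strips away the power relations and the particular isomorphism type and leaves only the commutator data of $\Phi_{10}$, on which $\omega$ and $\phi$ can be defined uniformly in $p$.
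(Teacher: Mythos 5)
The first thing to note is that the paper contains no proof of this statement to compare against: Theorem \ref{t:hoshi} is imported from Hoshi and Kang \cite{Hos11}, and the paper's own contribution (Proposition \ref{p:class3}, Corollary \ref{c:phi2346}, and Propositions \ref{p:phi5}--\ref{p:phi9}) is the converse direction, namely that $\B_0$ vanishes for every group of order $p^5$ outside $\Phi_{10}$. Judged as a free-standing argument, your proposal has a sound architecture that matches the literature: reducing to one representative via isoclinism invariance of $\B_0$ is exactly the route later formalized by Hoshi, Kang and Kunyavski\u\i\ \cite{Hos12}, and detecting a class by a central extension of $G$ by $\Z/p$ that splits over every abelian (equivalently, every bicyclic) subgroup is precisely Bogomolov's condition \eqref{eq:b0def}. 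But be aware that isoclinism invariance of $\B_0$ is itself a nontrivial theorem which postdates \cite{Hos11}; it does not follow formally from $\B_0(G)\cong\M(G)/\M_0(G)$, since the Schur multiplier $\M(G)$ is \emph{not} an isoclinism invariant (compare $\Z/p^2$ with $\Z/p\times\Z/p$). It must be cited or proved, and the same goes for your unproven side claim about class-$2$ groups of order $p^5$ (true, but asserted via an unspecified ``short analysis'').

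The genuine gap is that the step you yourself call the heart of the matter is never performed, even for a single group. You fix no representative of $\Phi_{10}$, write down no candidate $\omega$, construct no functional $\phi$ (no central extension), and verify nothing; all of this is deferred to ``finite but delicate bookkeeping.'' This cannot be waved through, because nothing in your setup forces such a $\phi$ to exist: for each of the families $\Phi_2,\ldots,\Phi_9$ the identical attempt must fail --- that failure is exactly what this paper proves --- so the existence of $\phi$ encodes the specific commutator structure of $\Phi_{10}$ and is the entire content of the theorem. Concretely, in the formalism of this paper you would take $G=\Phi_{10}(1^5)$ with polycyclic generators $\alpha,\alpha_1,\ldots,\alpha_4$, relations $[\alpha_i,\alpha]=\alpha_{i+1}$ for $i=1,2,3$ together with $[\alpha_1,\alpha_2]=\alpha_4^{k}$, $k\not\equiv 0 \pmod p$ (the relation distinguishing $\Phi_{10}$ from $\Phi_9$); then $\omega=[\alpha_3,\alpha^\varphi]^{k}[\alpha_1,\alpha_2^\varphi]^{-1}$ satisfies $\kappa^*(\omega)=1$, so $\omega\in\M^*(G)$, and the theorem is exactly the assertion $\omega\notin\M^*_0(G)$. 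Establishing that non-membership requires real work --- either an explicit presentation of $\M^*(G)$ via the Hopf-type formula of \cite{Mor11} together with a computation of the subgroup $\M^*_0(G)$, or an explicit finite quotient of $\tau(G)$ in which every commuting wedge dies while $\omega$ survives. Until that computation is exhibited, your proposal establishes only that such a verification would suffice, not that it succeeds.
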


Our main result shows that these groups are essentially the only ones having non-trivial Bogomolov multiplier:

\begin{theorem}
\label{t:p5b0}
Let $G$ be a group of order $p^5$, $p>3$. Then $\B _0(G)\neq 0$ if and only if $G$ belongs to the family $\Phi _{10}$. There are
precisely $\gcd (p-1,4)+\gcd (p-1,3)+1$ non-isomorphic groups with this property.
\end{theorem}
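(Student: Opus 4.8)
The plan is to exploit the description $\B_0(G)\cong \M(G)/\M_0(G)$ from \cite{Mor11} together with James's partition of the groups of order $p^5$ into the isoclinism families $\Phi_1,\dots,\Phi_{10}$. The decisive reduction is that $\B_0$ is an isoclinism invariant: since both $\M(G)=\ker\kappa$ and $\M_0(G)=\langle x\wedge y\mid [x,y]=1\rangle$ are built entirely from the nonabelian exterior square $G\wedge G$ and the commutator map $\kappa\colon G\wedge G\to [G,G]$, and since isoclinic groups carry canonically identified commutator structures, one verifies that isoclinic groups have isomorphic Bogomolov multipliers. I would establish this as a preliminary lemma; it collapses the entire problem to evaluating $\B_0$ on one convenient representative of each family $\Phi_k$.

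For the families $\Phi_1,\dots,\Phi_9$ I would prove $\B_0(G)=0$ by direct computation. Starting from a polycyclic presentation of a representative $G=\langle x_1,\dots,x_5\rangle$ drawn from James's tables, one computes $G\wedge G$ and the kernel $\M(G)$ explicitly, and then checks that every generator of $\M(G)$ already lies in $\M_0(G)$. The combinatorial Hopf-type formula of \cite{Mor11} turns each such verification into a finite check: one rewrites the defining relators in terms of commutators and confirms that the resulting quotient collapses. The family $\Phi_1$ of abelian groups is immediate, since there $\kappa$ is trivial and $\M(G)=\M_0(G)=G\wedge G$.

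For $\Phi_{10}$ the inequality $\B_0(G)\neq 0$ is exactly Theorem \ref{t:hoshi}, so the dichotomy requires no further work once the vanishing for $\Phi_1,\dots,\Phi_9$ is in hand, and the groups with $\B_0(G)\neq 0$ are precisely those in $\Phi_{10}$. It then remains to count the groups in $\Phi_{10}$. Here I would appeal to James's parametrization \cite{Jam80}: the isomorphism classes inside $\Phi_{10}$ are indexed by certain scaling parameters modulo the action of $\mathbb{F}_p^\times$, where the identifications are governed by whether prescribed elements are cubic or quartic residues mod $p$. Counting the orbits yields one ``generic'' class together with $\gcd(p-1,3)$ classes arising from cubes and $\gcd(p-1,4)$ classes arising from fourth powers, giving the total $\gcd(p-1,4)+\gcd(p-1,3)+1$.

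The main obstacle is twofold. First, the vanishing computations for the class-$3$ and class-$4$ families $\Phi_6,\dots,\Phi_9$ are the delicate ones: there $\M(G)$ is genuinely larger, and showing that each exterior product in $\ker\kappa$ can be rewritten as a combination of products of commuting elements requires carefully exploiting the relations rather than a crude order comparison. Second, the enumeration in $\Phi_{10}$ must be matched exactly against the isomorphism invariants so that distinct parameter values give non-isomorphic groups and none are omitted; the two gcd terms make this the arithmetically subtle point, and I expect the hypothesis $p>3$ to enter precisely here, ensuring that the relevant residue symbols and the family structure behave uniformly.
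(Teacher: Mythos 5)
Your overall architecture (exterior-square description of $\B_0$, vanishing for the first nine families, Theorem \ref{t:hoshi} for $\Phi_{10}$, counting via James's list) parallels the paper's, but your decisive reduction rests on a claim whose justification fails as stated. You assert that $\B_0$ is an isoclinism invariant ``since both $\M(G)$ and $\M_0(G)$ are built entirely from $G\wedge G$ and $\kappa$, and isoclinic groups carry canonically identified commutator structures.'' This argument proves too much: it would equally show that $\M(G)\cong \HH_2(G,\Z)$ itself is an isoclinism invariant, which is false --- all abelian groups are isoclinic to the trivial group, yet their Schur multipliers differ. The point is that an isoclinism supplies compatible isomorphisms $G_1/Z(G_1)\to G_2/Z(G_2)$ and $[G_1,G_1]\to [G_2,G_2]$, but these do \emph{not} induce any map on the exterior squares, so there is no canonical identification of $\M$ or of $\M_0$. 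The invariance of the quotient $\M(G)/\M_0(G)$ under isoclinism is in fact a true statement, but it is a genuine theorem --- it is one of the main results of Hoshi, Kang, and Kunyavski\u\i~\cite{Hos12} --- and it requires a substantial proof, not a one-line verification. As written, your preliminary lemma is a gap that swallows the hardest part of your plan.

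If you import (or prove) that theorem, your strategy becomes essentially the route of \cite{Hos12}, which is genuinely different from the paper's; the paper explicitly notes its own techniques avoid this. The paper never reduces to one representative per family, precisely because it does not have isoclinism invariance at its disposal. Instead, its key lemma is Proposition \ref{p:class3}: for $p>3$ and nilpotency class $\le 3$, if all nontrivial commutators $[g_i,g_j]$ of a polycyclic generating sequence are distinct absolute generators, then $\B_0(G)=0$. This condition is read off directly from James's presentations and disposes of \emph{every} group in $\Phi_2$, $\Phi_3$, $\Phi_4$, $\Phi_6$ at once; the families $\Phi_5$, $\Phi_7$, $\Phi_8$, $\Phi_9$ are then handled group by group by explicit commutator calculus in $\tau(G)$ (with nearly identical computations omitted). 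The trade-off: isoclinism invariance buys a cleaner reduction at the cost of a deep external theorem, while the paper's approach is computationally longer but elementary and self-contained. One further correction: the hypothesis $p>3$ does not enter ``precisely'' at the counting stage. It is used throughout the vanishing computations --- for instance $\binom{p}{3}$ is divisible by $p$ only when $p>3$, and the relation $\alpha_1^{(p)}=1$ collapses to $\alpha_1^p=1$ only then --- and for $p=3$ the dichotomy with $\Phi_{10}$ still holds, but the count is $3$ rather than $\gcd(2,4)+\gcd(2,3)+1=4$, so the formula itself also fails there.
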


The case $p=3$ is already dealt with in \cite{Hos11} and \cite{Mor11}, where it is also proved that a group order $3^5$ has non-trivial Bogomolov multiplier 
if and only if it belongs to $\Phi _{10}$. For the sake of simplicity of the proofs we therefore assume that $p>3$.
The main step in the proof of the above theorem is Proposition \ref{p:class3} which ensures a sufficient condition for a $p$-group of nilpotency class $\le 3$ to have trivial Bogomolov multiplier. This condition can be read off from a polycyclic presentation of the group. The result we obtain covers the families
$\Phi _2$, $\Phi _3$, $\Phi _4$, and $\Phi _6$, which form the major part of groups in question. 
The remaining families are then dealt with separately. The techniques we use are purely combinatorial, and do not require any cohomological machinery.
The fact that every group in $\Phi _{10}$ has nontrivial unramified Brauer group follows from  Hoshi and Kang \cite{Hos11}. 

In 1987, Bogomolov \cite{Bog88} announced a full classificiation of groups of order $p^6$ with nontrivial $\B_0$, yet no such classification has appeared.
We note here that all groups of order $p^6$ have been listed by James \cite{Jam80}. Our techniques could be applied and extended to find all groups among these that
have nontrivial unramified Brauer group, but the calculations would become quite lengthy.

We have recently become aware of a paper by Hoshi, Kang, and Kunyavski\u\i \ \cite{Hos12} where another classification of groups of order $p^5$ with non-trivial Bogomolov multiplier has appeared. In fact, their results are more general and also deal with isomorphisms of the corresponding fields.
We note here that our techniques are essentially different, and our argument is much shorter.

\subsection*{Notations}
Let $G$ be a group and $x,y\in G$.  We use the notation $x^y=y^{-1}xy$ for conjugation from the right. 
The commutator $[x,y]$ of elements $x$ and $y$ is defined by
$[x,y]=x^{-1}y^{-1}xy=x^{-1}x^y$. Commutators of higher weight are defined inductively by $[x_1,\ldots ,x_n]=[[x_1,\ldots ,x_{n-1}],x_n]$
for $x_1,\ldots ,x_n\in G$. 
We use shorthand notation $[x,{}_ny]$ for the commutator $[x,y,\ldots ,y]$ with $n$ copies of $y$.
When referring to the classifiation of groups of order $p^5$, we closely follow the notations of James \cite{Jam80}.


\section{The nonabelian exterior square of a group}
\label{s:exterior}

\noindent
We recall the definition and basic properties of the nonabelian exterior square of a group. The reader is referred to \cite{Bro87,Mil52} for more
thorough accounts on the theory and its generalizations.
Let $G$ be a group. We form the group $G\wedge G$, generated by the symbols
$m\wedge n$, where $m,n\in G$, subject to the following relations:
\begin{align}
\label{eq:tens1}
m_1m\wedge n &= (m_1^m\wedge n^m)(m\wedge n),\\
\label{eq:tens2}
m \wedge n_1n &= (m\wedge n)(m^n\wedge n_1^n),\\
\label{eq:tens3}
m\wedge m &= 1,
\end{align}
for all $m,m_1,n,n_1\in G$.
The group $G\wedge G$ is said to be
the {\it nonabelian exterior square} of $G$.
By definition, the commutator map $\kappa :G\wedge G\to [G,G]$, given by
$g\wedge h\mapsto [g,h]$, is a well defined homomorphism of groups.
Clearly $\M(G)=\ker\kappa$ is central in $G\wedge G$, and
$G$ acts trivially via diagonal action on  $\M(G)$.
Miller \cite{Mil52} proved that there is a natural isomorphism between $\M(G)$ and $\HH _2(G,\mathbb{Z})$.

An alternative way of obtaining $G\wedge G$ is the following.
Let $G$ be a group and let $G^\varphi$
be an isomorphic copy of $G$ via the mapping
$\varphi : g \mapsto g^\varphi$ for all $g \in G$.
We define the group $\tau (G)$ to be
$$
\tau(G)=\langle G,G^\varphi \mid
 [g,h^\varphi] ^x =
[g^x,(h^x)^\varphi] = [g,h^\varphi]^{x^\varphi},
[g,g^\varphi]=1\,
\forall x,g,h \in G\rangle .
$$
The groups $G$ and $G^\varphi$ embed into $\tau (G)$. 
In the rest of the paper we consider the group $[G,G^\varphi ]$ as a subgroup
of $\tau (G)$.
Then the map
$\phi:G\wedge G\to [G,G^\varphi]$
defined by $(g\wedge h)^\phi = [g,h^{\varphi}]$ for all $g$ and $h$ in $G$
is an isomorphism.
This was proved independently by 
Ellis and Leonard \cite{Ell95} and Rocco \cite{Roc91}.

The advantage of the above description of $G\wedge G$ is the ability of using the
full power of the commutator calculus instead of computing with elements of $G\wedge G$.
The following lemma collects various properties of $\tau (G)$ and $[G,G^\varphi ]$ that will be used in the proof
of the main result.

\begin{lemma}[\cite{Bly09}]
\label{l:tau}
Let $G$ be a group.
\begin{enumerate}
\item[(a)] If $G$ is nilpotent of class $c$, then $\tau (G)$ is nilpotent of class at most $c+1$.
\item[(b)] If $[G,G]$ is nilpotent of class $c$, then $[G,G^\varphi ]$ is nilpotent of class $c$ or $c+1$.
\item[(c)] If $G$ is nilpotent of class $\le 2$, then $[G,G^\varphi ]$ is abelian.
\item[(d)] $[g,h^\varphi ]=[g^\varphi ,h]$ for all $g,h\in G$.
\item[(e)] $[g,h,k^\varphi ]= [g,h^\varphi ,k]=[g^\varphi,h,k]=[g^\varphi ,h^\varphi ,k]=[g^\varphi ,h,k^\varphi]=[g,h^\varphi ,k^\varphi]$
for all $g,h,k\in G$.
\item[(f)] $[[g_1,h_1^\varphi],[g_2,h_2^\varphi ]]=[[g_1,h_1],[g_2,h_2]^\varphi ]$ for all $g_1,h_1,g_2,h_2\in G$.
\item[(g)] If $g$ and $h$ are commuting elements of $G$ of orders $m$ and $n$, respectively, then the order of $[g,h^\varphi]$ divides
$\gcd (m,n)$.
\end{enumerate}
\end{lemma}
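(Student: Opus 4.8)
The plan is to prove all seven parts using the explicit defining relations of $\tau(G)$, working throughout with commutator calculus inside $\tau(G)$ rather than with the abstract exterior square. The defining relations say precisely that $[g,h^\varphi]^x = [g^x,(h^x)^\varphi] = [g,h^\varphi]^{x^\varphi}$ and $[g,g^\varphi]=1$, so the key structural fact I would extract first is that conjugation of a ``mixed'' commutator $[g,h^\varphi]$ by $x$ coincides with conjugation by $x^\varphi$, and both equal the commutator with $g,h$ simultaneously conjugated. This single observation is the engine behind (d), (e), and (f). For part (d), I would start from $[gh,(gh)^\varphi]=1$ (which holds by the second defining relation applied to the product $gh$), expand both sides using the commutator identities for products together with $[g,g^\varphi]=[h,h^\varphi]=1$, and read off $[g,h^\varphi]=[h,g^\varphi]^{-1}$; combining this with the symmetry relation $[g,h^\varphi]^x=[g,h^\varphi]^{x^\varphi}$ and the relation $[g^\varphi,h]=[h,g^\varphi]^{-1}$ yields the desired $[g,h^\varphi]=[g^\varphi,h]$.

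For part (e), the chain of six equalities, I would treat each equality as an instance of pushing the $\varphi$-superscript across the outer commutator bracket using the fundamental relation above. Concretely, $[g,h,k^\varphi]$ is the commutator of $[g,h]$ with $k^\varphi$; applying $[a,b^\varphi]=[a,b^\varphi]$-type transport (the fact that conjugating by $k$ and by $k^\varphi$ agree on mixed commutators) lets me move $\varphi$ freely among the three entries, and (d) handles the interchange between a starred and unstarred entry in the same slot. I expect this to reduce to a finite, if slightly tedious, verification that each rewriting is licensed by one of the three defining relations or by (d).

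For the nilpotency statements (a), (b), (c): part (c) follows from (f), since if $G$ has class $\le 2$ then $[G,G]$ is central, so all commutators $[[g_1,h_1],[g_2,h_2]]$ are trivial, and (f) transports this triviality to $[G,G^\varphi]$, forcing it to be abelian. For (a) and (b) I would use that $\tau(G)$ is generated by $G$ and $G^\varphi$ together with the mixed subgroup $[G,G^\varphi]$, and that iterated commutators of weight $c+2$ (respectively the relevant weight for $[G,G^\varphi]$) must vanish because any such commutator can be rewritten, via (e) and the transport relations, as a commutator living in $\gamma_{c+1}(G)$ or its $\varphi$-image, which is trivial. Part (g) follows by observing that $[g,h^\varphi]^m=[g^m,h^\varphi]=1$ and symmetrically $[g,h^\varphi]^n=[g,(h^n)^\varphi]=1$ when $g,h$ commute (using that the map is multiplicative in each variable once the commuting hypothesis kills the correction terms), so the order divides both $m$ and $n$, hence $\gcd(m,n)$.

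The main obstacle I anticipate is part (b): pinning down precisely whether the class of $[G,G^\varphi]$ is $c$ or $c+1$ requires care, because the mixed commutators interact with both copies $G$ and $G^\varphi$, and the bookkeeping of which weight-$(c+1)$ commutators survive is delicate. Since the statement only asserts the class is $c$ \emph{or} $c+1$, it suffices to establish the upper bound $c+1$ (the lower bound being automatic from containing a copy of $\gamma_c$-type data), so I would focus on showing that every commutator of weight $c+2$ in $[G,G^\varphi]$ vanishes by reducing it through (e) to a commutator of weight $c+1$ in $[G,G]$. As all of these are results already established in the cited work \cite{Bly09}, the role here is to record them; I would present the above as the guiding computation and defer the full verification to that reference.
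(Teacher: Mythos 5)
The paper gives no proof of this lemma at all: it is quoted as a known result from \cite{Bly09} (with parts going back to Rocco and Ellis--Leonard), so your decision to defer full verification to that reference matches what the paper does, and most of your sketches do track the standard arguments. Your computation for (d) (expanding $[gh,(gh)^\varphi]=1$ and using the relation $[h,g^\varphi]^{h^\varphi}=[h,g^\varphi]^h$), your argument for (g) (the commuting hypothesis makes $[g,h^\varphi]$ invariant under conjugation by $g$ and $h$, so $[g,h^\varphi]^m=[g^m,h^\varphi]=1$ and $[g,h^\varphi]^n=[g,(h^n)^\varphi]=1$), and your weight-reduction scheme for (a) and (b) are all correct in outline.

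However, your argument for (c) contains a genuine gap. You assert that since $G$ has class $\le 2$, the commutators $[[g_1,h_1],[g_2,h_2]]$ are trivial in $G$, and that ``(f) transports this triviality'' to give $[[g_1,h_1],[g_2,h_2]^\varphi]=1$. The implicit principle here --- that $[x,y]=1$ in $G$ forces $[x,y^\varphi]=1$ in $\tau(G)$ --- is false, and its failure is precisely the subject of this paper: $\M^*_0(G)$ is by definition generated by the elements $[x,y^\varphi]$ with $[x,y]=1$, and if all of these were trivial then $\B_0(G)\cong\M^*(G)/\M^*_0(G)$ would always equal the full Schur multiplier, contradicting for instance $\B_0(G)=0$ for abelian $G$; concretely, for $G=C_p\times C_p$ the generators $x,y$ commute yet $[x,y^\varphi]\neq 1$, since $[G,G^\varphi]\cong G\wedge G\cong C_p$. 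The conclusion of (c) is of course true, but it needs a step that commutativity alone does not supply: by (e), $[[g_1,h_1],[g_2,h_2]^\varphi]=[[g_1,h_1^\varphi],[g_2,h_2]]$, and the latter vanishes because $[g_2,h_2]$ is \emph{central} in $G$, so the defining relation $[g,h^\varphi]^x=[g^x,(h^x)^\varphi]$ with $x=[g_2,h_2]$ shows that $x$ centralizes every generator $[g_1,h_1^\varphi]$ of $[G,G^\varphi]$. It is centrality fed through the defining relations, not commutativity transported by (f), that closes the argument; the same caution should be observed in your sketches for (a) and (b), where a reduction is only conclusive once it ends in an element of a trivial subgroup of $\tau(G)$ (such as $\gamma_{c+1}(G)$ itself, or a mixed commutator with a trivial entry), not merely in a mixed commutator of elements that commute in $G$.
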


Let $G$ be a finite solvable group. Then $G$ is {\it polycyclic}, i.e., it has a subnormal series
$G=G_1\triangleright G_2\triangleright\cdots\triangleright G_{n+1}=1$ such that every factor $G_{i}/G_{i+1}$ is cyclic of order $r_i$.
A {\it polycyclic generating sequence} of $G$ is a sequence $g_1, \ldots ,g_n$ of elements of $G$ such that $G_i = \langle G_{i+1}, g_i\rangle$ 
for all $1 \le i \le n$. The value $r_i$ is called the {\it relative order} of $g_i$.
With respect to a given polycyclic generating sequence $g_1,\ldots , g_n$, each element $g$ of $G$ can be represented in a unique way ({\it normal form}) as a product $g = g_1^{e_1} g_2^{e_2}\cdots g_n^{e_n}$ with exponents $e_i \in \{0,\ldots , r_i-1\}$.
Given a polycyclic generating sequence, the group $G$ can be presented by a {\it polycyclic presentation}, cf. \cite{Sim94} for further details. 

The following lemma will be basic in our considerations. 

\begin{lemma}[Proposition 20 of \cite{Bly09}]
\label{l:genset}
Let $G$ be a finite solvable group with a polycyclic generating sequence $g_1,\ldots ,g_n$. Then the group $[G,G^\varphi ]$, considered as a subgroup
of $\tau (G)$, is generated by the set $\{ [g_i,g_j^\varphi ] \mid i,j=1,\ldots ,n, i>j\}$.
\end{lemma}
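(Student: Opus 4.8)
The plan is to begin with a generating set that is already available and then to trim it. By the defining relations of the exterior square, transported through the isomorphism $G\wedge G\cong[G,G^\varphi]$, the subgroup $[G,G^\varphi]$ is generated by all elements $[g,h^\varphi]$ with $g,h\in G$. The proof will proceed in two stages: first show that the commutators of the \emph{polycyclic generators} already suffice, namely
\begin{equation*}
[G,G^\varphi]=\langle\,[g_i,g_j^\varphi]\mid 1\le i,j\le n\,\rangle,
\end{equation*}
and then cut this down to the indices $i>j$. It is worth keeping in mind the model statement for the ordinary derived subgroup: for a polycyclic generating sequence one has $[G,G]=\langle[g_i,g_j]\mid i>j\rangle$ as a subgroup, the point being that $G_2=\langle g_2,\ldots,g_n\rangle$ is normal with $G/G_2$ cyclic, so that on the abelian section $G_2/[G_2,G_2]$ the operator ``conjugation by $g_1$ minus the identity'' is additive and its image is generated by the $[g_j,g_1]$. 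I would lift exactly this argument to $\tau(G)$.

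For the first stage I would induct on the length $n$ of the polycyclic generating sequence. Put $G_2=\langle g_2,\ldots,g_n\rangle$, a normal subgroup of $G$ with $G/G_2$ cyclic. Applied to $G_2$ with the sequence $g_2,\ldots,g_n$, the inductive hypothesis gives $[G_2,G_2^\varphi]=\langle[g_i,g_j^\varphi]\mid n\ge i>j\ge 2\rangle$. It then remains to show that adjoining the generators $[g_j,g_1^\varphi]$, $2\le j\le n$, produces all of $[G,G^\varphi]$; here one writes an arbitrary $[g,h^\varphi]$ with $g,h$ in normal form and expands using the standard commutator identities
\begin{equation*}
[ab,c]=[a,c]^b[b,c],\qquad [a,bc]=[a,c][a,b]^c,
\end{equation*}
together with the defining relation $[g,h^\varphi]^x=[g^x,(h^x)^\varphi]$ of $\tau(G)$. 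The conjugations produced by these expansions are controlled by the commutator calculus of Lemma \ref{l:tau}: part (e) collapses the weight-three correction terms, rewriting $[g_i,g_j^\varphi,g_k]$ as $[[g_i,g_j],g_k^\varphi]$, which is again of the reducible form $[u,v^\varphi]$, and the cyclicity of $G/G_2$ plays the same linearizing role as in the model statement, forcing these corrections back into the subgroup already generated.

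The second stage is formal. The diagonal generators are trivial, $[g_i,g_i^\varphi]=1$, by the relation $[g,g^\varphi]=1$ of $\tau(G)$ (equivalently \eqref{eq:tens3}), while for $i<j$ Lemma \ref{l:tau}(d) yields
\begin{equation*}
[g_i,g_j^\varphi]^{-1}=[g_j^\varphi,g_i]=[g_j,g_i^\varphi],
\end{equation*}
so that each such generator is the inverse of one with larger first index. Hence $\{[g_i,g_j^\varphi]\mid i>j\}$ already generates, as claimed. The main obstacle is the bridge in the first stage: because the defining relation of $\tau(G)$ makes conjugation act \emph{diagonally} on both arguments of $[\,\cdot\,,\,\cdot^\varphi\,]$, a naive expansion keeps reintroducing conjugated, non-generator entries. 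What makes the induction close is the interplay of Lemma \ref{l:tau}(e), which linearizes the correction terms, with the normality of $G_2$ and the cyclicity of $G/G_2$; getting this bookkeeping right, rather than the final antisymmetry cleanup, is where the real work lies.
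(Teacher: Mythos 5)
The paper itself offers no proof of this lemma---it is imported verbatim as Proposition 20 of \cite{Bly09}---so your attempt has to stand on its own, and it does not. Your second stage is correct and genuinely formal: $[g_i,g_i^\varphi]=1$ holds by definition of $\tau(G)$, and $[g_i,g_j^\varphi]^{-1}=[g_j^\varphi,g_i]=[g_j,g_i^\varphi]$ by Lemma \ref{l:tau}(d). The gap is in the first stage, and you concede it yourself in your final sentence: the entire content of the lemma is the claim that the correction terms produced by the expansions land back in $H=\langle [g_i,g_j^\varphi]\rangle$, and your text asserts this (``forcing these corrections back into the subgroup already generated'') rather than proving it. Observing that $[g_i,g_j^\varphi,g_k]=[[g_i,g_j],g_k^\varphi]$ is ``again of the reducible form $[u,v^\varphi]$'' only sets up a recursion with no terminating measure. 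That this cannot be waved through is shown by taking $G=A_5$ with a two-element generating set $x,y$: all the tools you invoke (the expansion identities, the defining relation of $\tau(G)$, Lemma \ref{l:tau}(e)) hold in $\tau(A_5)$, and every $[g,h^\varphi]$ there is likewise a product of conjugates of $[x^{\pm1},(y^{\pm1})^\varphi]$-type elements; yet $\langle [x,y^\varphi]\rangle$ is a proper subgroup of $[G,G^\varphi]$, since its image under the commutator homomorphism $[G,G^\varphi]\to[G,G]$ is the cyclic group $\langle[x,y]\rangle$ while $[G,G]=A_5$. So solvability must enter through a specific mechanism, not by analogy with the model statement for $[G,G]$.

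Concretely, the missing steps are these. (i) Since $G/G_2$ is cyclic, hence abelian, $[G,G]\le G_2$; thus every correction term $[[g_i,g_j],g_k^\varphi]$ has its \emph{first} entry in $G_2$. (ii) The inductive hypothesis, transported along the natural homomorphism $\tau(G_2)\to\tau(G)$ (a point you elide), together with $G_2\trianglelefteq G$ and the relation $[a,b^\varphi]^x=[a^x,(b^x)^\varphi]$, gives that $[G_2,G_2^\varphi]=\langle [g_i,g_j^\varphi]\mid i>j\ge 2\rangle$ and that this subgroup is normal in $\tau(G)$; this disposes of all corrections with $k\ge 2$. (iii) For $k=1$ one needs a separate argument that $[u,g_1^\varphi]\in H$ for every $u\in G_2$: expand $u$ along the sequence $g_2,\ldots,g_n$ and note that $G_2$-conjugates of $[g_j,g_1^\varphi]$ are congruent to $[g_j,g_1^\varphi]$ modulo $[G_2,G_2^\varphi]$, again by Lemma \ref{l:tau}(e) and $[g_j,g_1]\in G_2$. (iv) Steps (i)--(iii) show $H$ is normal in $\tau(G)$, and then every $[g,h^\varphi]$, being a product of $G$-conjugates of the elements $[g_i,g_j^\varphi]^{\pm1}$, lies in $H$. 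Note also that the mechanism in (i)--(iii) is not the one you describe: in your model argument for $[G,G]$ the point is additivity of $u\mapsto[u,g_1]$ on an abelian section, whereas here the point is that corrections drop into the normal subgroup $G_2$, where the inductive hypothesis and its normality apply. Your skeleton (induction on $n$, $G_2$ normal with cyclic quotient, Lemma \ref{l:tau}(e)) is the right one and can be completed exactly as above, but as written the proof is a strategy with its crux left as an assertion.
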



\section{Groups of order $p^5$}
\label{s:p5}

\noindent
Groups of order $p^5$ were classified by James \cite{Jam80}. He compiled a list of polycyclic presentations of these groups, and
divided the non-abelian ones into families denoted by $\Phi _2,\ldots ,\Phi _{10}$, according to isoclinism. In the case $p=3$, James uses the notation $\Delta$
instead of $\Phi$.

As already mentioned, there are no groups $G$ of order $2^5$ with $\B_0(G)\neq 0$, see \cite{Chu08}.
Also, there are precisely three groups of order $3^5$ with nontrivial Bogomolov multiplier. These groups are described
in \cite{Hos11} and \cite{Mor11}. 
According to James's notation, they are the groups $\Delta _{10}(2111)a_i$, $i=1,2,3$.
From here on we assume that $p\ge 5$.  In this section we prove Theorem \ref{t:p5b0} using the theory developed in \cite{Mor11}. 
The main idea is the following. Suppose that $G$ is a finite group.
Let 
$\phi$ be the natural isomorphism between $G\wedge G$ and
$[G,G^\varphi ]$. Let $\kappa ^*:[G,G^\varphi]\to [G,G]$ be the composite of $\phi^{-1}$ and $\kappa$, and denote $\M^*(G)=\M (G)^\phi=\ker\kappa^*$ and $\M ^*_0(G)=\M _0(G)^\phi$. Then $\B _0(G)$ is clearly isomorphic to $\M^*(G) /\M^*_0(G)$
by \cite{Mor11}. 
In order to prove that $\B_0(G)=0$ for a given group $G$ it suffices to show that $\M^*(G) =\M^*_0(G)$. This can be achieved by finding a 
generating set of $\M^*(G)$ consisting solely of elements of $\M^*_0(G)$.

\begin{lemma}
\label{l:class3}
Let $G$ be a nilpotent group of class $\le 3$.
Then $$[x,y^n]=[x,y]^n[x,y,y]^{{n\choose 2}}[x,y,y,y]^{{n\choose 3}}$$ for all $x,y\in \tau (G)$ and every positive integer $n$.
\end{lemma}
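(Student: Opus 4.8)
The plan is to work inside $\tau(G)$, which by Lemma~\ref{l:tau}(a) is nilpotent of class at most $4$, and to reduce everything to a short induction on $n$. First I would fix $x,y\in\tau(G)$ and abbreviate $c_1=[x,y]$, $c_2=[x,{}_{2}y]$, $c_3=[x,{}_{3}y]$, so that $c_i\in\gamma_{i+1}(\tau(G))$. The whole argument rests on three structural facts read off from the class bound: since $\gamma_5(\tau(G))=1$ we have $[x,{}_{4}y]=1$, hence $[c_3,y]=1$ and in fact $c_3$ is central; next $[c_1,y]=c_2$ and $[c_2,y]=c_3$; and because $c_i\in\gamma_{i+1}(\tau(G))$, for distinct $i,j$ the commutator $[c_i,c_j]$ lies in $\gamma_{i+j+2}(\tau(G))\subseteq\gamma_5(\tau(G))=1$, so $c_1,c_2,c_3$ commute pairwise. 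Equivalently, $c_i^{\,y}=c_ic_{i+1}$ (with the convention $c_4=1$) and the subgroup $\langle c_1,c_2,c_3\rangle$ is abelian.

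Next I would run the induction on $n$, the case $n=1$ being just the definition of $c_1$ together with $\binom{1}{2}=\binom{1}{3}=0$. For the inductive step I use the standard identity $[x,y^n]=[x,y]\,[x,y^{n-1}]^{y}$, which comes from $[a,bc]=[a,c][a,b]^{c}$ applied with $b=y^{n-1}$ and $c=y$. Feeding in the inductive hypothesis $[x,y^{n-1}]=c_1^{\,n-1}c_2^{\binom{n-1}{2}}c_3^{\binom{n-1}{3}}$ and conjugating by $y$, which is an automorphism, turns each $c_i$ into $c_ic_{i+1}$; using that the $c_i$ commute, this yields $[x,y^{n-1}]^{y}=c_1^{\,n-1}c_2^{\,(n-1)+\binom{n-1}{2}}c_3^{\binom{n-1}{2}+\binom{n-1}{3}}$.

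Finally I would multiply on the left by $c_1=[x,y]$ and collect exponents, again invoking the mutual commutativity of $c_1,c_2,c_3$, to get $[x,y^n]=c_1^{\,n}c_2^{(n-1)+\binom{n-1}{2}}c_3^{\binom{n-1}{2}+\binom{n-1}{3}}$. Two applications of Pascal's rule, namely $\binom{n-1}{1}+\binom{n-1}{2}=\binom{n}{2}$ and $\binom{n-1}{2}+\binom{n-1}{3}=\binom{n}{3}$, then reduce this to exactly $c_1^{\,n}c_2^{\binom{n}{2}}c_3^{\binom{n}{3}}$, closing the induction. The only point requiring genuine care, and the one place where the hypothesis that $G$ has class $\le 3$ enters (through the class-$\le 4$ bound on $\tau(G)$), is the verification that all commutators of weight $\ge 5$ vanish; this is precisely what makes $c_3$ central and the $c_i$ pairwise commuting, and once that bookkeeping is settled the remainder of the computation is purely formal.
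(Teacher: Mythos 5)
Your proof is correct and follows essentially the same route as the paper: both invoke Lemma~\ref{l:tau}(a) to bound the class of $\tau(G)$ by $4$ and then run an induction on $n$, with your pairwise-commutation observations about $c_1,c_2,c_3$ playing exactly the role of the paper's remark that $\langle x,y\rangle$ is metabelian. You have simply written out in full the induction that the paper declares to "follow easily," and the bookkeeping (the identity $[x,y^n]=[x,y][x,y^{n-1}]^y$, the conjugation rule $c_i^y=c_ic_{i+1}$, and Pascal's rule) is all accurate.
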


\begin{proof}
Since the class of $G$ is at most 3, it follows that $\tau (G)$ is nilpotent of class $\le 4$ by Lemma \ref{l:tau} (a). In particular if $x,y\in \tau (G)$, then $\langle x,y\rangle$ is metabelian. Thus (a) follows easily by induction on $n$.
\end{proof}

In the following, we say that an element $g$ of a polycyclic generating sequence of $G$ is {\it absolute} if its relative order is equal to the order of $g$.

\begin{proposition}
\label{p:class3}
Let $G$ be a $p$-group, $p>3$, and suppose that $G$ is nilpotent of class $\le 3$. Let $g_1,\ldots ,g_n$ be a 
polycyclic generating sequence of $G$. 
Suppose that all nontrivial commutators $[g_i,g_j]$, where $i>j$, are different absolute elements of the polycyclic generating sequence.
Then $\B _0(G)=0$.
\end{proposition}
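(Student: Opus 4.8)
The plan is to show that $\M^*(G)=\M^*_0(G)$ by exhibiting a generating set of $\M^*(G)$ whose every element already lies in $\M^*_0(G)$. By Lemma \ref{l:genset}, the group $[G,G^\varphi]$ is generated by the elements $[g_i,g_j^\varphi]$ with $i>j$, so I would begin by analyzing how the commutator map $\kappa^*$ acts on these generators and on their products. The hypothesis is that the nontrivial $[g_i,g_j]$ (for $i>j$) are distinct absolute members of the polycyclic sequence; this is precisely the condition that lets me control $[G,G]$ combinatorially, since each such commutator is one of the $g_k$ and has order equal to its relative order.

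First I would sort the index pairs $(i,j)$ with $i>j$ into two types: those for which $[g_i,g_j]=1$, and those for which $[g_i,g_j]=g_k$ for some $k$. For the first type, $[g_i,g_j^\varphi]$ lies in $\M^*(G)$ by definition of $\kappa^*$, and moreover it lies in $\M^*_0(G)$ because $g_i$ and $g_j$ commute — these are exactly the images under $\phi$ of the defining generators $x\wedge y$ with $[x,y]=1$. So the commuting pairs contribute nothing new. For the second type, I want to understand the kernel of $\kappa^*$ restricted to the subgroup generated by the corresponding $[g_i,g_j^\varphi]$; since $\kappa^*$ sends $[g_i,g_j^\varphi]\mapsto g_k$ and the $g_k$ are distinct absolute elements, the map on these generators is, roughly, diagonal with respect to the orders. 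The key point will be to show that an element of $\M^*(G)$ — that is, a product of the $[g_i,g_j^\varphi]$ and their powers that maps to $1$ under $\kappa^*$ — can be rewritten, modulo $\M^*_0(G)$, as a product of commuting-pair terms together with power relations that are themselves forced to be trivial or to lie in $\M^*_0(G)$.

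The technical engine here is Lemma \ref{l:class3} together with the identities in Lemma \ref{l:tau}. Because $G$ has class $\le 3$, the group $[G,G^\varphi]$ is at worst nilpotent of class $\le 4$, and commutators behave tractably: Lemma \ref{l:tau}(e),(f) collapse many mixed $\varphi$-commutators, and Lemma \ref{l:class3} expands $[x,y^n]$ via the binomial coefficients $\binom n2,\binom n3$. Using the absoluteness hypothesis, if $[g_i,g_j]=g_k$ has order $p^a$ equal to its relative order, then raising $[g_i,g_j^\varphi]$ to the power $p^a$ must land in $\M^*(G)$; I would expand this power with Lemma \ref{l:class3} and show the resulting higher commutators $[g_i,g_j,g_j]^\varphi$-type terms are themselves images of commuting pairs (hence in $\M^*_0(G)$), using that the $g_k$ are distinct so no collision among the commutators can produce an uncontrolled relation. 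The distinctness is what guarantees there are no "hidden" relations among the $g_k$ beyond their individual orders, so $\M^*(G)$ is generated by the commuting-pair terms and by these controlled powers.

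The main obstacle will be the power terms: one must verify that every element of $\M^*(G)$ arising from the order relations on the absolute commutators already lies in $\M^*_0(G)$. Concretely, the hard step is to show that $[g_i,g_j^\varphi]^{p^a}\in\M^*_0(G)$ when $[g_i,g_j]=g_k$ has order $p^a$; this requires combining Lemma \ref{l:class3} to expand the power, Lemma \ref{l:tau}(g) to control the orders of the commuting-element terms $[g_i,g_j,g_j^\varphi]$ and $[g_i,g_j,g_j,g_j^\varphi]$, and the hypothesis $p>3$ to guarantee that the binomial coefficients $\binom{p^a}{2}$ and $\binom{p^a}{3}$ are divisible by enough powers of $p$ to kill those terms. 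The restriction $p>3$ enters exactly because $\binom{p^a}{3}$ involves division by $3$, so small primes would spoil the divisibility bookkeeping; I expect this arithmetic of binomial coefficients, rather than the structural argument, to be the delicate part.
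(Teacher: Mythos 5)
Your plan follows the paper's proof essentially step for step: generators $[g_i,g_j^\varphi]$ from Lemma \ref{l:genset}; commuting pairs absorbed into $\M^*_0(G)$; the distinctness/absoluteness hypothesis plus uniqueness of normal forms showing that an element $w=\prod_k[g_{i_k},g_{j_k}^\varphi]^{n_k}\tilde w$, with $\tilde w\in\M^*_0(G)$, lies in $\M^*(G)$ precisely when $p^{r_k}\mid n_k$ for each $k$ (where $p^{r_k}$ is the order of $[g_{i_k},g_{j_k}]$); and finally the reduction to showing $[g_{i_k},g_{j_k}^\varphi]^{p^{r_k}}\in\M^*_0(G)$ via Lemma \ref{l:class3} and divisibility of $\binom{p^{r_k}}{2}$, $\binom{p^{r_k}}{3}$ by $p^{r_k}$, which is where $p>3$ enters. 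So there is no divergence of method; the question is whether your sketch closes the argument, and at the step you yourself call the hard one it does not quite.

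Two concrete holes. First, expanding by Lemma \ref{l:class3} gives $[g_i,(g_j^{p^a})^\varphi]=[g_i,g_j^\varphi]^{p^a}\,[g_i,g_j^\varphi,g_j^\varphi]^{\binom{p^a}{2}}\,[g_i,g_j^\varphi,g_j^\varphi,g_j^\varphi]^{\binom{p^a}{3}}$, so besides the two correction factors you list there is the principal factor $[g_i,(g_j^{p^a})^\varphi]$, which your proposal never mentions; it is not trivial, since the order of $g_j$ is unrelated to $p^a$. It does lie in $\M^*_0(G)$, but this needs an argument: a computation inside $G$ (using $g_k^{p^a}=1$, $[g_k,g_j]^{p^a}=1$, $\gamma_4(G)=1$, and $p^a\mid\binom{p^a}{2}$ for $p$ odd) shows $[g_i,g_j^{p^a}]=1$, so the principal factor is the image of a commuting pair and belongs to $\M^*_0(G)$ by definition; this is exactly the content of the paper's assertion that $[g_{i_k}^{p^{r_k}},g_{j_k}^\varphi]\in\M^*_0(G)$. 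Second, your appeal to Lemma \ref{l:tau}(g) for the first correction term is unjustified: by Lemma \ref{l:tau}(e) that term is $[g_k,g_j^\varphi]$, and $g_k=[g_i,g_j]$ need \emph{not} commute with $g_j$ — their commutator is a possibly nontrivial element of $\gamma_3(G)$ — so it is not an image of a commuting pair and (g) does not apply. What saves it is again divisibility: $[g_k,g_j]$ has order dividing $p^a$, so $[g_k,g_j^\varphi]^{\binom{p^a}{2}}$ has trivial $\kappa^*$-image and hence lies in $\M^*(G)$, which is central in $\tau(G)$; one further expansion (or the paper's rewriting of this factor as $[[g_j^\varphi,g_i]^{\binom{p^a}{2}},g_i]$, which is then trivial by centrality) places it in $\M^*_0(G)$. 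Only your last term $[g_i,g_j,g_j,g_j^\varphi]$ is genuinely a commuting-pair term, because $[[g_k,g_j],g_j]\in\gamma_4(G)=1$. Both holes are repairable with the tools you already cite, but as written the proof is incomplete exactly at its crux.
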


\begin{proof}
The group $[G,G^\varphi ]$ is generated by the set $\mathcal{G}=\{ [g_i,g_j^\varphi ]\mid i>j\}$. 
As above, let $\kappa ^*$ be the commutator map $[G,G^\varphi ]\to [G,G]$, and suppose that
$[g_{i_1},g_{j_1}],\ldots ,[g_{i_\ell},g_{j_\ell}]$ are the nontrivial elements of $\mathcal{G}^{\kappa ^*}$.
Since $[G,G^\varphi ]$ is nilpotent of class $\le 2$, every word $w\in [G,G^\varphi ]$ can be written as
$$w=\prod _{k=1}^\ell [g_{i_k},g_{j_k}^\varphi ]^{n_k} \cdot \tilde{w},$$
where $\tilde{w}\in \M ^*_0(G)$. Mapping $w$ with $\kappa ^*$, we get
$$w^{\kappa ^*} =\prod _{k=1}^\ell[g_{i_k},g_{j_k}] ^{n_k}.$$
Let $p^{r_k}$ be the  order of $[g_{i_k},g_{j_k}]$. Since these commutators are different absolute terms of the polycyclic generating sequence of $G$,
it follows that $w\in \M^*(G)=\ker\kappa ^*$ if and only if $p^{r_k}$ divides $n_k$ for every $i$. We have that $[g_{i_k},g_{j_k}^\varphi ]^{p^{r_k}}$ belongs
to $\M ^*(G)$. On the other hand,
\begin{align*}
[g_{i_k},g_{j_k}^\varphi ]^{p^{r_k}} &= [g_{i_k}^{p^{r_k}},g_{j_k}^\varphi ][g_{j_k}^\varphi ,g_{i_k},g_{i_k}]^{{p^{r_k} \choose 2}}[g_{j_k}^\varphi ,g_{i_k},g_{i_k},g_{i_k}]^{{p^{r_k} \choose 3}}\\
&= [g_{i_k}^{p^{r_k}},g_{j_k}^\varphi ][[g_{j_k}^\varphi ,g_{i_k}]^{{p^{r_k} \choose 2}},g_{i_k}][[g_{j_k}^\varphi ,g_{i_k}]^{{p^{r_k} \choose 3}},g_{i_k},g_{i_k}].
\end{align*}
Since $p>3$, it follows that 
$[[g_{j_k}^\varphi ,g_{i_k}]^{{p^{r_k} \choose 2}},g_{i_k}]$ and $[[g_{j_k}^\varphi ,g_{i_k}]^{{p^{r_k} \choose 3}},g_{i_k},g_{i_k}]$ belong to
$\M ^*_0(G)$, hence $[g_{i_k}^{p^{r_k}},g_{j_k}^\varphi ]\in\M ^*_0(G)$. Thus we conclude that
$[g_{i_k},g_{j_k}^\varphi ]^{p^{r_k}}$ belongs
to $\M ^*_0(G)$. By the above we clearly have that
$\M^*(G)=\langle [g_{i_k},g_{j_k}^\varphi ]^{p^{r_k}} \mid k=1,\ldots ,\ell\rangle \M^*_0(G)= \M^*_0(G)$ and therefore $\B_0(G)=0$.
\end{proof}

Applying Proposition \ref{p:class3} and inspecting the polycyclic presentations of groups of order $p^5$ in \cite{Jam80}, we obtain the following.

\begin{corollary}
\label{c:phi2346}
Let $G$ be a group of order $p^5$, $p>3$. If $G$ belongs to one of the families $\Phi _2$, $\Phi _3$, $\Phi _4$ or $\Phi _6$, then $\B _0(G)=0$.
\end{corollary}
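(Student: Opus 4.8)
The plan is a direct family-by-family application of Proposition \ref{p:class3} to James's explicit polycyclic presentations. Since that proposition requires nilpotency class $\le 3$, the first step is to record that every group in $\Phi _2$, $\Phi _3$, $\Phi _4$, and $\Phi _6$ has class at most $3$; this can be read off from the commutator relations in \cite{Jam80}, where the members of these four families all have trivial fourth term of the lower central series. This places them within the scope of Proposition \ref{p:class3}.

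The core of the argument is then to verify, for each presentation, that the given polycyclic generating sequence $g_1,\ldots ,g_5$ satisfies the hypothesis of the proposition. I would take each group of order $p^5$ listed by James inside these families, read off its defining relations, and check three things: first, that every nontrivial commutator $[g_i,g_j]$ with $i>j$ equals a single generator $g_k$ of the sequence rather than a proper power-product; second, that distinct nontrivial commutators are assigned to distinct generators; and third, that each such generator $g_k$ is absolute, meaning its relative order $p$ equals its actual order. The third point holds because the generators arising as commutators lie in the lower central series of $G$ and, in these families, have order exactly $p$, so relative order and order agree.

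The main obstacle is the finite but careful bookkeeping across all the parametrized groups within each family: I must confirm that the ``distinct absolute generators'' condition is never violated, in particular that no commutator collapses to a power-product of generators and that no two commutators are forced to coincide. This is precisely the feature distinguishing $\Phi _2$, $\Phi _3$, $\Phi _4$, $\Phi _6$ from the remaining families, where commutators may be identified or expressed as non-generator products, and it is why those other families must be treated by separate arguments. Once the verification is complete, Proposition \ref{p:class3} yields $\B _0(G)=0$ for each $G$ in the four families, proving the corollary.
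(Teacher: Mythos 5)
Your proposal takes essentially the same route as the paper: the paper's proof of this corollary is precisely the observation that, upon inspecting James's polycyclic presentations, every group in $\Phi_2$, $\Phi_3$, $\Phi_4$, $\Phi_6$ has class $\le 3$ and its nontrivial commutators $[g_i,g_j]$ are distinct absolute members of the polycyclic generating sequence, so Proposition \ref{p:class3} applies directly. Your additional remarks (class read off from the lower central series, commutator generators having order exactly $p$ since the derived subgroups in these families are elementary abelian) correctly fill in the bookkeeping the paper leaves implicit.
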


We deal with the remaining families separately. At first we exhibit those with trivial Bogomolov multiplier.

\begin{proposition}
\label{p:phi5}
Let $G$ be a group of order $p^5$, $p>3$. If $G$ belongs to the family $\Phi _5$, then $\B_0(G)=0$.
\end{proposition}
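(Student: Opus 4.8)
The plan is to compute $\B_0(G)$ directly for the groups in the family $\Phi_5$ using the combinatorial description $\B_0(G)\cong \M^*(G)/\M^*_0(G)$, following exactly the strategy of Proposition \ref{p:class3}. First I would retrieve from James's list \cite{Jam80} the polycyclic presentation of $\Phi_5$, determine the nilpotency class (which is $2$ for this family, so that $[G,G^\varphi]$ is abelian by Lemma \ref{l:tau}(c)), and read off the commutator relations among the polycyclic generators. Since Lemma \ref{l:genset} tells us that $[G,G^\varphi]$ is generated by the elements $[g_i,g_j^\varphi]$ with $i>j$, the whole computation reduces to bookkeeping on this finite generating set.

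Next I would identify the kernel $\M^*(G)=\ker\kappa^*$ by tracking which products $\prod_k [g_{i_k},g_{j_k}^\varphi]^{n_k}$ map to the identity under the commutator map $\kappa^*$. The crucial feature that distinguishes $\Phi_5$ from the families covered by Proposition \ref{p:class3} is that its defining commutators are presumably \emph{not} all distinct absolute generators of the sequence: typically in $\Phi_5$ there is a relation forcing two commutators to coincide (or one commutator equals a power of another), so the hypothesis of the proposition fails and one must analyze $\ker\kappa^*$ by hand. I expect that the relations produce enough coincidences that the generators of $\M^*(G)$ can all be exhibited as products of elements of the form $[g^{p^r},h^\varphi]$ together with higher commutators.

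The key step is then to show that every generator of $\M^*(G)$ already lies in $\M^*_0(G)=\langle x\wedge y\mid [x,y]=1\rangle^\phi$. For this I would use the identity of Lemma \ref{l:class3} to expand $[g_i,g_j^\varphi]^{p^r}$ as $[g_i^{p^r},g_j^\varphi]$ times higher commutators, exactly as in the proof of Proposition \ref{p:class3}; since the class is at most $2$ here, these higher terms vanish outright, which simplifies matters. The element $[g_i^{p^r},g_j^\varphi]$ belongs to $\M^*_0(G)$ precisely when $g_i^{p^r}$ commutes with $g_j$ in $G$, which I would verify from the power relations in the presentation. Where a kernel element arises instead from a coincidence of two commutators, say $[g_{i_1},g_{j_1}]=[g_{i_2},g_{j_2}]$, I would use Lemma \ref{l:tau}(d)--(f) to rewrite the corresponding difference $[g_{i_1},g_{j_1}^\varphi]^{-1}[g_{i_2},g_{j_2}^\varphi]$ as a commutator of commuting elements, placing it in $\M^*_0(G)$.

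The main obstacle will be the verification that the power relations $g_i^{p}=\cdots$ in the $\Phi_5$ presentation indeed force $g_i^{p^r}$ to commute with the relevant generators, so that each $[g_i^{p^r},g_j^\varphi]$ genuinely lands in $\M^*_0(G)$; this requires careful reading of James's presentation and tracking how taking $p$-th powers interacts with the commutator structure. Once all generators of $\M^*(G)$ are accounted for in this way, I would conclude that $\M^*(G)=\M^*_0(G)$, hence $\B_0(G)=0$.
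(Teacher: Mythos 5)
Your proposal is correct and follows essentially the same route as the paper's proof: the paper treats $\Phi_5(2111)$ (class $2$, with the coincidence $[\alpha_1,\alpha_2]=[\alpha_3,\alpha_4]=\beta$ defeating Proposition \ref{p:class3}), reduces $\M^*(G)$ modulo $\M^*_0(G)$ to the single difference $[\alpha_1,\alpha_2^\varphi][\alpha_3,\alpha_4^\varphi]^{-1}$, and places it in $\M^*_0(G)$ by expanding $[\alpha_1\alpha_4,(\alpha_2\alpha_3)^\varphi]$, a commutator of the commuting pair $\alpha_1\alpha_4$, $\alpha_2\alpha_3$ --- exactly the ``coincidence of commutators'' mechanism you describe. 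One small correction: the higher commutator terms in your $p$-th power expansion do not ``vanish outright'' just because $G$ has class $2$ (in $\tau(G)$, which has class up to $3$, a term such as $[g_i,g_j^\varphi,g_i]=[[g_i,g_j],g_i^\varphi]$ can be nontrivial), but each such term lies in $\M^*_0(G)$, which is all your argument requires.
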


\begin{proof}
There are two groups in the family $\Phi _5$, by James's notations \cite{Jam80}
we denote them by $\Phi _5(2111)$ and $\Phi _5(1^5)$. They are both nilpotent of class
$2$. We only prove the result for $G=\Phi _5(2111)$, the proof for $\Phi _5(1^5)$ is almost identical. The group $G$ has a polycyclic presentation
$$G=\langle \alpha _1,\alpha _2,\alpha _3,\alpha _4,\beta \mid [\alpha _1,\alpha _2]=[\alpha _3,\alpha _4]=\alpha _1^p=\beta, \alpha _2^p=\alpha _3^p=
\alpha _4^p=\beta ^p=1\rangle,$$
where all the relations of the form $[x,y]=1$ between the generators have been omitted from the list. A generating set of the group $[G,G^\varphi ]$ 
can be found by Lemma \ref{l:genset}. It consists of 
$[\alpha _1,\alpha _2^\varphi ]$, $[\alpha _3 ,\alpha _4^\varphi ]$, $[\alpha _1,\alpha _3^\varphi ]$, $[\alpha _1,\alpha _4^\varphi ]$,
$[\alpha _2,\alpha _3^\varphi ]$, $[\alpha _2,\alpha _4^\varphi ]$, and $[\alpha _i,\beta^\varphi ]$, $i=1,2,3,4$. Apart from the first two, all of these
generators belong to $\M ^*_0(G)$. Since $[G,G^\varphi ]$ is abelian, every element $w\in [G,G^\varphi ]$ can be written as 
$w=[\alpha _1,\alpha _2^\varphi ]^m[\alpha _3 ,\alpha _4^\varphi ]^n\tilde{w}$, where $\tilde{w}\in \M ^*_0(G)$. Then
$w^{\kappa ^*}=\beta ^{m+n}$, hence $w\in \M^*(G)$ if and only if $p$ divides $m+n$. Now, since $\tau (G)$ is nilpotent of class $\le 3$, we get
$$1=[\alpha _1^\varphi ,\alpha _2^p]=[\alpha _1^\varphi, \alpha _2]^p[\alpha _1^\varphi, \alpha _2,\alpha _2]^{{p \choose 2}}=[\alpha _1^\varphi, \alpha _2]^p,$$
hence $[\alpha _1, \alpha _2^\varphi]$ is of order $p$. Similarly, $[\alpha _3, \alpha _4^\varphi]$ is of order $p$. It follows that
$$\M ^*(G)=\langle [\alpha _1,\alpha _2^\varphi ][\alpha _3,\alpha _4^\varphi ]^{-1}\rangle \M^*_0(G).$$
Note that $1=[\alpha _1,\alpha _2][\alpha _4,\alpha _3]=[\alpha _1\alpha _4,\alpha _2\alpha _3]$, hence
$[\alpha _1\alpha _4,(\alpha _2\alpha _3)^\varphi]\in \M ^*_0(G)$. Expanding the latter using the class restriction and Lemma \ref{l:tau}, we get
\begin{align*}
[\alpha _1\alpha _4,(\alpha _2\alpha _3)^\varphi] &=
[\alpha _1,\alpha _2^\varphi\alpha _3^\varphi ][\alpha _1,\alpha _2^\varphi\alpha _3^\varphi ,\alpha _4][\alpha _4,\alpha _2^\varphi\alpha _3^\varphi ]\\
&= [\alpha _1,\alpha _2^\varphi\alpha _3^\varphi][\alpha _1,\alpha _2\alpha _3,\alpha _4^\varphi][\alpha _4,\alpha _3^\varphi][\alpha _4,\alpha _2^\varphi]
[\alpha _4,\alpha _2^\varphi, \alpha _3^\varphi]\\
&= [\alpha _1,\alpha _3^\varphi ][\alpha _1,\alpha _2^\varphi ][\beta ,\alpha _3^\varphi][\beta ,\alpha _4^\varphi ][\alpha _4,\alpha _3^\varphi][\alpha _4,\alpha _2^\varphi ].
\end{align*}
Note that $[\alpha _1,\alpha _3^\varphi]$, $[\beta ,\alpha_3^\varphi]$, $[\beta ,\alpha _4^\varphi ]$, and $[\alpha _4,\alpha _2^\varphi]$
belong to $\M ^*_0(G)$.
From here it follows that $[\alpha _1,\alpha _2^\varphi ][\alpha _3,\alpha _4^\varphi ]^{-1}\in \M^*_0(G)$, hence $\B _0(G)=0$.
\end{proof}

\begin{proposition}
\label{p:phi7}
Let $G$ be a group of order $p^5$, $p>3$. If $G$ belongs to the family $\Phi _7$, then $\B_0(G)=0$.
\end{proposition}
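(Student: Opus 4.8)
The plan is to mirror exactly the strategy used for $\Phi_5$ in Proposition \ref{p:phi5}: produce a generating set of $[G,G^\varphi]$ via Lemma \ref{l:genset}, isolate those generators whose image under $\kappa^*$ is nontrivial, and show that $\M^*(G)=\M^*_0(G)$ by exhibiting enough relations in $\tau(G)$ forcing the nontrivial-looking generators to lie in $\M^*_0(G)$. First I would record, from James's tables \cite{Jam80}, the polycyclic presentations of all groups in the family $\Phi_7$. These are nilpotent of class $3$ (this is what distinguishes $\Phi_7$ from the class-$2$ family $\Phi_5$ and prevents us from invoking Proposition \ref{p:class3} directly, since the commutators there will fail to be distinct absolute generators). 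Writing $\alpha_1,\dots$ for the generators with $\beta=[\alpha_1,\alpha_2]$ and the class-$3$ commutator $\gamma=[\beta,\alpha_1]$ (or whatever James's convention dictates), I would apply Lemma \ref{l:genset} to list the generators $[g_i,g_j^\varphi]$ with $i>j$ of $[G,G^\varphi]$, and by inspection identify the ones mapping to $1$ under $\kappa^*$ --- these automatically lie in $\M^*_0(G)$ and can be discarded.

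Next I would compute the orders of the surviving generators. Here Lemma \ref{l:class3} is the key tool: using the class-$3$ expansion of $[x,y^{p}]$ together with the relations of $G$ and the Hall--Witt-type identities in Lemma \ref{l:tau}, I would show that each nontrivial $[g_i,g_j^\varphi]$ has order $p$ (exactly as the computation $[\alpha_1^\varphi,\alpha_2^p]=[\alpha_1^\varphi,\alpha_2]^p=1$ established in $\Phi_5$). The crucial point is that the binomial coefficients $\binom{p}{2}$ and $\binom{p}{3}$ are divisible by $p$ when $p>3$, so the higher commutator correction terms vanish modulo the relevant order; this is precisely where the hypothesis $p>3$ is used, just as in Proposition \ref{p:class3}. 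With these orders computed, $\M^*(G)$ is generated modulo $\M^*_0(G)$ by a small number of products of the nontrivial generators lying in $\ker\kappa^*$.

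The main obstacle, and the heart of the argument, is the final reduction: to show each such generator of $\M^*(G)/\M^*_0(G)$ is in fact trivial. The technique from $\Phi_5$ is to find, in $G$ itself, a relation of the form $[a,b]=1$ among suitable products $a,b$ of the generators, so that $[a,b^\varphi]\in\M^*_0(G)$; then expand $[a,b^\varphi]$ in $\tau(G)$ using the commutator relations of Lemma \ref{l:tau} (parts (d), (e), (f)) and the class restriction, collecting terms. All but the wanted generator turn out to lie in $\M^*_0(G)$ (being commutators of commuting elements, or images of the discarded generators), forcing the wanted one into $\M^*_0(G)$ as well. Finding the right commuting pairs $a,b$ is the delicate combinatorial step, since it depends on the specific structure relations of each group in $\Phi_7$; I expect the class-$3$ terms appearing in the expansion (which were absent in the abelian case $\Phi_5$) to require careful bookkeeping via Lemma \ref{l:class3} and Lemma \ref{l:tau}(e), but no genuinely new idea beyond what Proposition \ref{p:phi5} already illustrates. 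Should a single commuting pair not suffice for some group, I would iterate, introducing one relation per nontrivial generator until the quotient $\M^*(G)/\M^*_0(G)$ collapses, concluding $\B_0(G)=0$.
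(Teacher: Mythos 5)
Your proposal follows exactly the paper's own argument: the paper treats $\Phi_7(1^5)$ by listing the generators $[\alpha_1,\alpha^\varphi]$, $[\alpha_2,\alpha^\varphi]$, $[\alpha_1,\beta^\varphi]$ of $[G,G^\varphi]$ modulo $\M^*_0(G)$, shows each has order $p$ via Lemma \ref{l:class3} (using $p>3$ to kill the binomial correction terms), and then eliminates the single surviving generator $[\alpha_2,\alpha^\varphi][\alpha_1,\beta^\varphi]^{-1}$ of $\M^*(G)/\M^*_0(G)$ by expanding the commuting-pair relation $[\alpha_2\beta,\alpha\alpha_1]=1$ in $\tau(G)$ --- precisely the three-step strategy you describe, with the other groups in $\Phi_7$ handled by the same adaptation. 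Your plan is correct and essentially identical to the paper's proof; the only content it leaves open is the explicit choice of commuting pair for each presentation, which you correctly identify as the delicate combinatorial step.
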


\begin{proof}
The family $\Phi _7$ consists of five groups. James \cite{Jam80} denotes them by
$\Phi _7(2111)a$, $\Phi _7(2111)b_r$, where $r=1$ or $r$ is the smallest non-quadratic residue mod $p$, $\Phi _7(2111)c$, and $\Phi _7(1^5)$.
These groups are all nilpotent of class $3$.

At first we deal with the case $G=\Phi _7(1^5)$.
The group $G$ has a polycyclic presentation
\begin{multline*}
G=\langle \alpha ,\alpha _1,\alpha _2,\alpha _3,\beta \mid [\alpha _i,\alpha ]=\alpha _{i+1}, [\alpha _1,\beta ]=\alpha _3,\\
\alpha ^p=\alpha _1^{(p)}=\alpha _{i+1}^p=\beta ^p=1, (i=1,2)\rangle ,
\end{multline*}
where $\alpha _1^{(p)}$ denotes $\alpha _1^p\alpha _2^{{p\choose 2}}\alpha _3^{{p\choose 3}}$. Since $p>3$, the relation $\alpha _1^{(p)}=1$, together
with other power relations, implies $\alpha _1^p=1$.

The group $[G,G^\varphi ]$ is generated modulo $\M ^*_0(G)$ by $[\alpha _1,\alpha ^\varphi]$, $[\alpha _2,\alpha ^\varphi]$, and 
$[\alpha _1,\beta ^\varphi]$. Since the nilpotency class of $[G,G^\varphi ]$ is at most 2, every element $w\in [G,G^\varphi ]$ can be written as
$w=[\alpha _1,\alpha ^\varphi]^{m_1}[\alpha _2,\alpha ^\varphi]^{m_2}[\alpha _1,\beta ^\varphi]^{m_3}\tilde{w}$, where $\tilde{w}\in\M^*_0(G)$.
This gives $w^{\kappa ^*}=\alpha _2^{m_1}\alpha _3^{m_2+m_3}$, therefore $w\in\M ^*(G)$ if and only if $p$ divides $m_1$ and $m_2+m_3$. By Lemma \ref{l:class3} we have for $i=1,2$ that
\begin{align*}
1 &= [\alpha^\varphi ,\alpha _i^p]\\
&= [\alpha ^\varphi ,\alpha _i]^p[\alpha ^\varphi ,\alpha _i,\alpha _i]^{{p\choose 2}}[\alpha ^\varphi ,\alpha _i,\alpha _i,\alpha _i]^{{p\choose 3}}\\
&= [\alpha ^\varphi,\alpha _i]^p[\alpha _{i+1},\alpha _i^\varphi]^{{p \choose 2}}\\
&= [\alpha ^\varphi,\alpha _i]^p,
\end{align*}
and similarly $[\alpha _i,\beta ^\varphi ]^p=1$. From here it follows that
$\M ^*(G)$ is generated modulo $\M ^*_0(G)$ by $[\alpha _2,\alpha ^\varphi][\alpha _1,\beta ^\varphi]^{-1}$, thus we need to show that
$[\alpha _2,\alpha ^\varphi][\alpha _1,\beta ^\varphi]^{-1}\in \M^*_0(G)$. At first we observe that
$[\alpha _2\beta,\alpha\alpha _1]=[\alpha _2,\alpha _1]^\beta[\alpha _2,\alpha]^{\alpha _1\beta}[\beta ,\alpha _1][\beta ,\alpha ]^{\alpha _1}=1$,
hence $[\alpha _2\beta,(\alpha\alpha _1)^\varphi]\in\M ^*_0(G)$. Expansion gives
\begin{align*}
[\alpha _2\beta,(\alpha\alpha _1)^\varphi] &=
[\alpha _2,\alpha _1^\varphi]^\beta[\alpha _2,\alpha^\varphi]^{\alpha _1^\varphi\beta}[\beta ,\alpha _1^\varphi][\beta ,\alpha^\varphi ]^{\alpha _1^\varphi}\\
&= [\alpha _2,\alpha _1^\varphi]^\beta[\alpha _2,\alpha^\varphi][\alpha _2,\alpha^\varphi,\alpha _1^\varphi\beta][\beta ,\alpha _1^\varphi][\beta ,\alpha^\varphi ]^{\alpha _1^\varphi}.
\end{align*}
Observe that $[\alpha _2,\alpha _1^\varphi ]$, $[\alpha _2,\alpha ^\varphi, \alpha _1^\varphi\beta ]$, and $[\beta ,\alpha ^\varphi]$ all belong
to $\M ^*_0(G)$.
Thus we conclude that $[\alpha _2,\alpha ^\varphi][\alpha _1,\beta ^\varphi]^{-1}=[\alpha _2,\alpha^\varphi][\beta ,\alpha _1^\varphi]\in \M^*_0(G)$, as required.

The proofs for $\Phi _7(2111)a$ and $\Phi _7(2111)c$ are almost the same and are thus omitted. In the case of $G=\Phi _7(2111)b_r$, a polycyclic
presentation of $G$ is
\begin{multline*}
G=\langle \beta _1,\ldots ,\beta _5\mid [\beta _2,\beta _1]=\beta _3, [\beta _3,\beta _1]=\beta _4, [\beta _2,\beta _5]=[\beta _3,\beta _5]=\beta _4,\\
\beta _2^{(p)}=\beta _4^r, \beta _1^p=\beta _3^p=\beta _4^p=\beta _5^p=1\rangle.
\end{multline*}
It is now easy to adapt the above argument to show that $\B _0(G)=0$, hence we skip the details.
\end{proof}

\begin{proposition}
\label{p:phi8}
Let $G$ be a group of order $p^5$, $p>3$. If $G$ belongs to the family $\Phi _8$, then $\B_0(G)=0$.
\end{proposition}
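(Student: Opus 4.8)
The plan is to follow the same combinatorial scheme used for the families $\Phi_5$ and $\Phi_7$ in Propositions \ref{p:phi5} and \ref{p:phi7}. First I would write down, for each of the groups in $\Phi_8$, James's polycyclic presentation from \cite{Jam80}. The groups in $\Phi_8$ are nilpotent of class $3$, so their derived subgroups are abelian and Lemma \ref{l:tau}(b) guarantees that $[G,G^\varphi]$ is nilpotent of class at most $2$. By Lemma \ref{l:genset} the group $[G,G^\varphi]$ is generated by the elements $[g_i,g_j^\varphi]$ with $i>j$, where $g_1,\ldots ,g_5$ is the given polycyclic generating sequence. All generators $[g_i,g_j^\varphi]$ for which $[g_i,g_j]=1$ in $G$ lie in $\M^*_0(G)$ by definition, so they can be discarded immediately, leaving only a handful of generators coming from the nontrivial structure constants of the presentation.

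Next I would express an arbitrary element $w\in [G,G^\varphi]$, modulo $\M^*_0(G)$, as a product of powers of these few remaining generators, and compute its image $w^{\kappa^*}$ in $[G,G]$. The condition $w\in \M^*(G)=\ker\kappa^*$ then translates, exactly as in the earlier proofs, into a system of congruences on the exponents. To cut the generating set of $\M^*(G)$ modulo $\M^*_0(G)$ down to size, I would apply Lemma \ref{l:class3}: expanding $1=[g_j^\varphi,g_i^{p}]$, together with the other power relations read off from the presentation, shows that each key generator $[g_i,g_j^\varphi]$ has order dividing $p$. Combined with the congruences coming from $\kappa^*$, this should reduce $\M^*(G)/\M^*_0(G)$ to being generated by one or two elements of the form $[g_i,g_j^\varphi][g_k,g_\ell^\varphi]^{-1}$, each of which maps to the identity under $\kappa^*$.

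It then remains to show that these coset representatives actually lie in $\M^*_0(G)$. This is the heart of the argument and the step I expect to be the main obstacle: I would search the presentation for a pair of elements $x,y$, each a short product of the generators $g_i$, that commute in $G$, i.e.\ $[x,y]=1$. For such a pair $[x,y^\varphi]$ lies in $\M^*_0(G)$ by definition, and expanding it with the commutator identities of Lemma \ref{l:tau} and the class-$3$ collection process (as was done with $[\alpha_1\alpha_4,(\alpha_2\alpha_3)^\varphi]$ for $\Phi_5$ and with $[\alpha_2\beta,(\alpha\alpha_1)^\varphi]$ for $\Phi_7$) produces the desired coset generator together with further terms all visibly in $\M^*_0(G)$. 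Finding the correct commuting product $x,y$ for each group of the family is the only genuinely creative part; once it is in hand, the expansion is a routine, if lengthy, commutator calculation, and we conclude that $\M^*(G)=\M^*_0(G)$, whence $\B_0(G)=0$.
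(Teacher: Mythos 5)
Your overall scheme is indeed the machinery the paper uses, but two of your concrete predictions fail for this family, and the actual proof turns out to be much shorter than you anticipate. The family $\Phi_8$ contains a single group,
$\Phi_8(32)=\langle \alpha_1,\alpha_2,\beta \mid [\alpha_1,\alpha_2]=\beta=\alpha_1^p,\ \beta^{p^2}=\alpha_2^{p^2}=1\rangle$,
and its special feature is that the commutator $\beta=[\alpha_1,\alpha_2]$ is itself the $p$-th power $\alpha_1^p$ and has order $p^2$. First, your claim that expanding $1=[g_j^\varphi,g_i^{p}]$ shows the key generators have order dividing $p$ is impossible here: the image of $[\alpha_1,\alpha_2^\varphi]$ under $\kappa^*$ is $\beta$, of order $p^2$, so no such bound can hold; moreover the relation $g_i^p=1$ is simply not available ($\alpha_1^p=\beta\neq 1$ and $\alpha_2^p\neq 1$). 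The correct statement, obtained by expanding $1=[\alpha_1^\varphi,\alpha_2^{p^2}]$ with the same collection identities, is $[\alpha_1,\alpha_2^\varphi]^{p^2}=1$. Second, the step you call the heart of the argument --- finding a commuting pair $x,y$ and expanding $[x,y^\varphi]$ --- is not needed at all: modulo $\M^*_0(G)$ the group $[G,G^\varphi]$ is generated by the single element $[\alpha_1,\alpha_2^\varphi]$, the kernel condition on $w=[\alpha_1,\alpha_2^\varphi]^m\tilde w$ is exactly $p^2\mid m$, and since $[\alpha_1,\alpha_2^\varphi]^{p^2}=1$ one gets $\M^*(G)=\M^*_0(G)$ outright, with no leftover difference elements to dispose of.

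The one genuine subtlety your plan glosses over is the second generator produced by Lemma \ref{l:genset}, namely $[\beta,\alpha_2^\varphi]$, which cannot be discarded by your rule since $[\beta,\alpha_2]=[\alpha_1^p,\alpha_2]=\beta^p\neq 1$; note also that its $\kappa^*$-image is not independent of $\beta$, so the exponents do not satisfy a clean diagonal system of congruences as in $\Phi_5$ and $\Phi_7$. It is eliminated not by a commuting-pair argument but by the power relation $\beta=\alpha_1^p$: expanding $[\alpha_1^p,\alpha_2^\varphi]$ gives $[\beta,\alpha_2^\varphi]=[\alpha_1,\alpha_2^\varphi]^p$, the correction terms vanishing because $[\alpha_1,\alpha_2^\varphi,\alpha_1]=[\beta,\alpha_1^\varphi]=[\alpha_1^p,\alpha_1^\varphi]=1$. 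With these two repairs (orders $p^2$ rather than $p$, and power-relation absorption rather than commuting pairs) your plan does close the proof, and what results is precisely the paper's argument.
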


\begin{proof}
The only group in the family $\Phi _8$ is
$$G=\Phi _8(32)=\langle \alpha _1,\alpha _2,\beta \mid [\alpha _1,\alpha _2]=\beta =\alpha _1^p, \beta ^{p^2}=\alpha _2^{p^2}=1\rangle .$$
$G$ is nilpotent of class $3$, and $[G,G^\varphi ]$ is nilpotent of class $\le 2$, generated modulo $\M^*_0(G)$ by $[\alpha _1,\alpha _2^\varphi]$.
We have
\begin{align*}
1 &= [\alpha _1^\varphi,\alpha _2^{p^2}]\\
&=[\alpha _1^\varphi,\alpha _2]^{p^2}[\alpha _1^\varphi,\alpha _2,\alpha _2]^{p^2\choose 2}[\alpha _1^\varphi,\alpha _2,\alpha _2,\alpha _2]^{p^2\choose 3}\\
&= [\alpha _1^\varphi,\alpha _2]^{p^2}[\beta ,\alpha _2^\varphi]^{p^2\choose 2}[\beta ,\alpha _2,\alpha _2^\varphi]^{p^2\choose 3}\\
&=[\alpha _1^\varphi,\alpha _2]^{p^2}.
\end{align*}
Every element $w\in [G,G^\varphi]$ can be written as $w=[\alpha _1,\alpha _2^\varphi]^m\tilde{w}$ with $\tilde{w}\in\M^*_0(G)$. Thus $w$ belongs to
$\M^*(G)$ if and only if $\beta ^m=1$, which in turn is equivalent to $p^2|m$. It follows that $\M^*(G)=\M^*_0(G)$, hence $\B_0(G)=0$.
\end{proof}

The families $\Phi _9$ and $\Phi _{10}$ consist of groups $G$ of order $p^5$ that are nilpotent of class $4$. Since every group of order $p^5$ is
metabelian, it follows that $[G,G^\varphi ]$ is nilpotent of class $\le 2$. Besides, $\tau (G)$ is nilpotent of class $\le 5$. When expanding
commutators in $\tau (G)$, we will need the following lemma.

\begin{lemma}[Lemma 9 of \cite{Mor08}]
\label{l:class5}
Let $H$ be a nilpotent group of class $\le 5$, $n$ a positive integer and $x,y\in H$. Then
$$[x^n,y]=[x,y]^n[x,y,x]^{{n\choose 2}}[x,y,x,x]^{{n\choose 3}}[x,y,x,x,x]^{{n\choose 4}}[x,y,x,[x,y]]^{\sigma (n)},$$
where $\sigma (n)=n(n-1)(2n-1)/6$.
\end{lemma}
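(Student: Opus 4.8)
The plan is to prove the identity by induction on $n$, working entirely inside the two-generator subgroup $\langle x,y\rangle$, which is still nilpotent of class $\le 5$. First I would fix the abbreviations $c=[x,y]$, $c_1=[c,x]=[x,y,x]$, $c_2=[c_1,x]=[x,y,x,x]$, $c_3=[c_2,x]=[x,y,x,x,x]$, and $d=[c_1,c]=[x,y,x,[x,y]]$, so that the claimed formula reads
$$[x^n,y]=c^{n}\,c_1^{\binom{n}{2}}\,c_2^{\binom{n}{3}}\,c_3^{\binom{n}{4}}\,d^{\sigma(n)}.$$
The key structural observations come from weights: $c,c_1,c_2,c_3$ have weights $2,3,4,5$, while $d$ has weight $3+2=5$. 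Since the class is at most $5$, the elements $c_3$ and $d$ are central, and among $c,c_1,c_2,c_3,d$ every commutator has weight $\ge 6$ and hence is trivial \emph{except} $[c_1,c]=d$ (equivalently $[c,c_1]=d^{-1}$). In particular the relevant conjugation rules are the exact identities $c^{x}=c\,[c,x]=cc_1$, $c_1^{x}=c_1c_2$, $c_2^{x}=c_2c_3$, $c_3^{x}=c_3$, and $d^{x}=d$, each following from $g^{x}=g[g,x]$ together with the vanishing of the weight-$6$ correction terms.

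The second ingredient I would record is the elementary product rule: if $[a,b]$ is central, then $(ab)^{n}=a^{n}b^{n}[b,a]^{\binom{n}{2}}$, proved by a one-line induction using $ba=ab[b,a]$. With these in hand, the induction on $n$ is mechanical. The base case $n=1$ is immediate since all the higher binomial and $\sigma$ exponents vanish. For the inductive step I would expand
$$[x^{n+1},y]=[x^{n},y]^{x}\,[x,y],$$
substitute the inductive hypothesis for $[x^{n},y]$, and push the conjugation by $x$ through the product using the rules above. This turns the right-hand side into
$$(cc_1)^{n}\,(c_1c_2)^{\binom{n}{2}}\,(c_2c_3)^{\binom{n}{3}}\,c_3^{\binom{n}{4}}\,d^{\sigma(n)}\cdot c.$$
Applying the product rule to each bracketed factor (the only non-central commutator arising is $[c_1,c]=d$) and then collecting, the exponents of $c_1,c_2,c_3$ recombine by Pascal's rule into $\binom{n+1}{2},\binom{n+1}{3},\binom{n+1}{4}$, and the trailing $c$ merges with $c^{n}$ to give $c^{n+1}$.

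The one genuinely delicate point, and the step I expect to be the main obstacle, is the bookkeeping of the central factor $d$. Two sources contribute: expanding $(cc_1)^{n}$ produces $d^{\binom{n}{2}}$, and dragging the final $c$ leftward past $c_1^{\,n+\binom{n}{2}}=c_1^{\binom{n+1}{2}}$ produces $d^{\binom{n+1}{2}}$ via the relation $c_1c=cc_1 d$. Hence the total $d$-exponent becomes $\sigma(n)+\binom{n}{2}+\binom{n+1}{2}$, and the induction closes precisely because $\binom{n}{2}+\binom{n+1}{2}=n^{2}=\sigma(n+1)-\sigma(n)$, the last equality reflecting that $\sigma(n)=\sum_{j=1}^{n-1}j^{2}$. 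Getting the sign of $[c_1,c]$ and the direction of the collection right is what makes this arithmetic identity come out, so I would verify it explicitly (for instance against the case $n=2$, where $[x^{2},y]=cc_1c=c^{2}c_1 d$ matches $\sigma(2)=1$) before presenting the general step.
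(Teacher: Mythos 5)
Your proof is correct. Note that there is no in-paper proof to compare it with: the paper imports this identity verbatim as Lemma 9 of \cite{Mor08}, so your argument serves as a self-contained verification of a quoted result, by what is essentially the standard collection method. The key points all check out: since the class is $\le 5$, the only non-commuting pair among $c=[x,y]$, $c_1=[c,x]$, $c_2=[c_1,x]$, $c_3=[c_2,x]$, $d=[c_1,c]$ is $(c,c_1)$ with $[c_1,c]=d$ central, the conjugation rules $c^x=cc_1$, $c_1^x=c_1c_2$, $c_2^x=c_2c_3$ are exact instances of $g^x=g[g,x]$, and the inductive step via $[x^{n+1},y]=[x^n,y]^x[x,y]$ collects correctly, with the $d$-exponent closing because $\sigma(n)+\binom{n}{2}+\binom{n+1}{2}=\sigma(n)+n^2=\sigma(n+1)$, consistent with $\sigma(n)=\sum_{j=1}^{n-1}j^2$ and with your sanity check $[x^2,y]=c^2c_1d$.
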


\begin{proposition}
\label{p:phi9}
Let $G$ be a group of order $p^5$, $p>3$. If $G$ belongs to the family $\Phi _9$, then $\B_0(G)=0$.
\end{proposition}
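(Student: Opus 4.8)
The plan is to repeat the strategy of Propositions \ref{p:phi5}, \ref{p:phi7} and \ref{p:phi8}: to exhibit a generating set of $\M^*(G)$ contained in $\M^*_0(G)$, so that $\M^*(G)=\M^*_0(G)$ and hence $\B_0(G)=0$. The family $\Phi_9$ consists of finitely many groups of order $p^5$, all of nilpotency class $4$; in James's notation they are $\Phi_9(1^5)$ together with the groups of type $\Phi_9(2111)$, and their presentations differ only in the power relations. I would carry out the argument in full for the representative $G=\Phi_9(1^5)$, whose polycyclic generating sequence $\alpha,\alpha_1,\alpha_2,\alpha_3,\alpha_4$ satisfies the chain relations $[\alpha_i,\alpha]=\alpha_{i+1}$ for $i=1,2,3$, with $\alpha_4$ central and all generators of order $p$; the remaining defining commutators are trivial, and this is exactly the feature that separates $\Phi_9$ from $\Phi_{10}$, where such a commutator is nonzero. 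The other groups of $\Phi_9$ are handled identically, the only change appearing in the order computations below.

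First I would apply Lemma \ref{l:genset} to obtain the generating set $\{[g_i,g_j^\varphi]\mid i>j\}$ of $[G,G^\varphi]$. All but three of its members arise from pairs of generators that commute in $G$ and hence lie in $\M^*_0(G)$; the survivors are $[\alpha_1,\alpha^\varphi]$, $[\alpha_2,\alpha^\varphi]$ and $[\alpha_3,\alpha^\varphi]$. Although $[G,G^\varphi]$ is only nilpotent of class $\le 2$, its commutator terms cause no trouble: by Lemma \ref{l:tau}(f) a bracket $[[g_i,g_j^\varphi],[g_k,g_l^\varphi]]$ equals $[[g_i,g_j],[g_k,g_l]^\varphi]$, and since $[G,G]$ is abelian the elements $[g_i,g_j]$ and $[g_k,g_l]$ commute in $G$, so this bracket already lies in $\M^*_0(G)$. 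Consequently every $w\in[G,G^\varphi]$ can be written, modulo $\M^*_0(G)$, as $[\alpha_1,\alpha^\varphi]^{m_1}[\alpha_2,\alpha^\varphi]^{m_2}[\alpha_3,\alpha^\varphi]^{m_3}$, and applying $\kappa^*$ gives $w^{\kappa^*}=\alpha_2^{m_1}\alpha_3^{m_2}\alpha_4^{m_3}$. As $\alpha_2,\alpha_3,\alpha_4$ are distinct absolute terms of the generating sequence, $w$ lies in $\M^*(G)$ if and only if $p$ divides each of $m_1,m_2,m_3$.

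It therefore suffices to show that the three survivors each have order $p$, for then $[\alpha_j,\alpha^\varphi]^{m_j}=1$ whenever $p\mid m_j$, and the description above yields $\M^*(G)=\M^*_0(G)$ directly. This is the point at which Lemma \ref{l:class5} is needed. Expanding $1=[\alpha^\varphi,\alpha_i^p]$ for $i=1,2,3$ by the collection formula produces the leading term $[\alpha^\varphi,\alpha_i]^p$ together with correction terms carrying the coefficients ${p\choose 2}$, ${p\choose 3}$, ${p\choose 4}$ and $\sigma(p)$. Here the hypothesis $p>3$ is essential: it guarantees that all of these coefficients are divisible by $p$, so that—after the correction commutators are rewritten by Lemma \ref{l:tau}(e)—they either vanish or are absorbed into $\M^*_0(G)$, leaving $[\alpha^\varphi,\alpha_i]^p=1$. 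I expect this verification to be the main obstacle: because $G$ has class $4$ the group $\tau(G)$ has class up to $5$, so the expansions carry far more correction terms than in the class-$3$ situation of $\Phi_7$, and each of them must be rewritten via Lemma \ref{l:tau}(e) and checked to land in $\M^*_0(G)$.

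For the groups of type $\Phi_9(2111)$ one generator has order $p^2$, and the same expansion—now invoking the coefficients ${p^2\choose 2}$, ${p^2\choose 3}$, ${p^2\choose 4}$ and $\sigma(p^2)$, again divisible by $p$ since $p>3$—shows that the corresponding survivor has order $p^2$ rather than $p$, with the kernel condition adjusted accordingly. Should the survivors then fail to map injectively onto independent basis elements, I would conclude exactly as in Proposition \ref{p:phi7}: choose a product $u_1u_2$ of generators with $[u_1,u_2]=1$ in $G$, so that a suitable mixed commutator $[u_1u_2,(v_1v_2)^\varphi]$ lies in $\M^*_0(G)$, and expand it by the class restriction together with Lemma \ref{l:tau}(e) so that every term except the single outstanding survivor is manifestly of the form $[g,h^\varphi]$ with $g$ and $h$ commuting, whence the survivor too lies in $\M^*_0(G)$.
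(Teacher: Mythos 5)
Your argument for $\Phi_9(1^5)$ is essentially the paper's: the three survivors $[\alpha_i,\alpha^\varphi]$, the criterion that $w\in\M^*(G)$ if and only if $p$ divides each $m_i$, and the expansion of the relation $\alpha_i^p=1$ via Lemma \ref{l:class5}, whose correction terms die because their entries commute in $G$ and because ${p\choose 2}$, ${p\choose 3}$, ${p\choose 4}$ and $\sigma(p)$ are all divisible by $p$ when $p>3$. This part is fine, with two minor quibbles: Lemma \ref{l:class5} expands $[x^n,y]$, so the expansion should be taken over the entry carrying the $p$-th power; and if some correction terms land in $\M^*_0(G)$ rather than vanish, the right conclusion is $[\alpha_i,\alpha^\varphi]^p\in\M^*_0(G)$, which still suffices since $\M^*_0(G)$ is a central subgroup.

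The genuine gap is your last paragraph, i.e.\ the case $G=\Phi_9(2111)b_r$, where $\alpha_1$ has order $p^2$ while $\alpha,\alpha_2,\alpha_3,\alpha_4$ have order $p$. The kernel condition cannot be ``adjusted accordingly'': it is dictated by the orders of the images $\alpha_2,\alpha_3,\alpha_4$ of the survivors under $\kappa^*$, and these are $p$ in \emph{every} group of $\Phi_9$, including $b_r$. So membership of $w$ in $\M^*(G)$ still only forces $p\mid m_1$, and what must be proved is $[\alpha_1,\alpha^\varphi]^p\in\M^*_0(G)$. Your expansion of $1=[\alpha^\varphi,\alpha_1^{p^2}]$ yields only that $[\alpha_1,\alpha^\varphi]$ has order dividing $p^2$ --- strictly weaker, and $\M^*(G)=\M^*_0(G)$ does not follow. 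Your fallback does not rescue this: by your own criterion it fires only when the survivors' images under $\kappa^*$ fail to be independent, which never happens in $\Phi_9$, so it is never invoked and the gap remains. The paper closes this case by expanding on the \emph{other} entry: since $\alpha^p=1$ in $b_r$, expanding $1=[\alpha^p,\alpha_1^\varphi]$ by Lemma \ref{l:class5} gives $[\alpha,\alpha_1^\varphi]^p$ times correction terms that vanish, whence $[\alpha_1,\alpha^\varphi]$ has order $p$ after all. (Alternatively, in the spirit of your own toolkit: $\alpha_1^p$ is a power of the central element $\alpha_4$, so $[\alpha_1^p,\alpha^\varphi]\in\M^*_0(G)$, and expanding it by Lemma \ref{l:class5} gives $[\alpha_1,\alpha^\varphi]^p\in\M^*_0(G)$.) Note also that in $\Phi_9(2111)a$ the power relations still give $\alpha_1^p=\alpha_2^p=\alpha_3^p=1$, so the original expansion applies verbatim and no survivor acquires order $p^2$; your blanket description of the $(2111)$ groups is inaccurate there as well.
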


\begin{proof}
The groups belonging to the family $\Phi _9$ are denoted by $\Phi _9(2111)a$, $\Phi _9(2111)b_r$, where $r+1=1,2,\ldots ,\gcd (p-1,3)$, and $\Phi _9(1^5)$.

All the groups in $\Phi _9$ have a polycyclic generating sequence $\alpha ,\alpha _1,\ldots ,\alpha _4$ satisfing the relations 
$[\alpha _i,\alpha ]=\alpha _{i+1}$, where $i=1,2,3$. These are the only nontrivial commutator relations between these generators. Thus
the group $[G,G^\varphi ]$ is generated modulo $\M^*_0(G)$ by
$[\alpha _i,\alpha ^\varphi ]$, where $i=1,2,3$. Since the nilpotency class of $[G,G^\varphi ]$ is at most 2, every element $w$ of $[G,G^\varphi ]$ can be
written as 
$w=\prod _{i=1}^3 [\alpha _i,\alpha ^\varphi ]^{m_i}\tilde{w},$
where $\tilde{w}\in\M ^*_0(G)$. We have that $w^{\kappa ^*}=\prod _{i=1}^3\alpha _{i+1}^{m_i}$. In all cases, the elements $\alpha _{i+1}$, where $i=1,2,3$, have order
$p$, hence $w\in \M^*(G)$ if and only if $p|m_i$, $i=1,2,3$.

In the cases when $G=\Phi _9(1^5)$ or $G=\Phi _9(2111)a$, the power relations imply that $\alpha _1^p=1$. For $i=1,2,3$ we get by Lemma \ref{l:class5} that
\begin{align*}
1 &= [\alpha _i^p,\alpha ^\varphi ]\\
&= [\alpha _i,\alpha^\varphi ]^p[\alpha _i,\alpha^\varphi,\alpha _i ]^{p\choose 2}[\alpha _i,\alpha^\varphi,{}_2\alpha _i ]^{p\choose 3}
[\alpha _i,\alpha^\varphi,{}_3\alpha _i ]^{p\choose 4}[\alpha _i,\alpha^\varphi,\alpha _i,[\alpha _i,\alpha^\varphi ]]^{\sigma (p)}\\
&=[\alpha _i,\alpha^\varphi ]^p[\alpha _{i+1},\alpha _i^\varphi ]^{p\choose 2}[\alpha _{i+1},\alpha _i,\alpha _i^\varphi ]^{p\choose 3}
[\alpha _{i+1},\alpha _i,\alpha _i,\alpha _i^\varphi ]^{p\choose 4}[\alpha _{i+1},\alpha _i^\varphi,[\alpha _i,\alpha^\varphi ]]^{\sigma (p)}\\
&= [\alpha _i,\alpha^\varphi ]^p.
\end{align*}
From here it follows that $\M^*(G)=\M^*_0(G)$, hence $\B _0(G)=0$.

When $G=\Phi _9(2111)b_r$, the defining relations imply that $\alpha ^p=\alpha _1^{p^2}=1$. By the above calculations we see that $[\alpha _{i+1},\alpha^\varphi]$, $i=1,2,3$, are elements of order $p$. On the other hand, expansion of $1=[\alpha ^p,\alpha _1^\varphi ]$ yields that $[\alpha _1,\alpha^\varphi ]$ has order $p$ as well. Therefore $\M^*(G)=\M^*_0(G)$, and thus $\B _0(G)=0$.
\end{proof}

By Theorem \ref{t:hoshi}, this finishes the proof of Theorem \ref{t:p5b0}.


\end{document}